\documentclass[12pt]{article}
\usepackage{amsmath}
\usepackage{amsthm}
\usepackage{amsfonts}
\usepackage{geometry}
\usepackage{graphicx}
\usepackage{algorithm}
\usepackage{algpseudocode}
\usepackage{longtable}
\usepackage{color}
\usepackage{hyperref}
\usepackage{authblk}
\usepackage[backend=bibtex]{biblatex}
\newtheorem{lemma}{Lemma}
\newtheorem{theorem}{Theorem}
\newtheorem{proposition}{Proposition}

\newcommand{\C}{\mathbb{C}}
\newcommand{\R}{\mathbb{R}}
\newcommand{\Q}{\mathbb{Q}}
\newcommand{\Z}{\mathbb{Z}}
\newcommand{\F}{\mathbb{F}}
\newcommand{\Fd}{\mathbb{F}[\sqrt{d}]}
\newcommand{\Fix}{\operatorname{Fix}}

\title{Complex to Rational Fast Matrix Multiplication}

\author[1]{Yoav Moran\thanks{Email: yoav.gross@mail.huji.ac.il}}
\author[1]{Oded Schwartz\thanks{Email: odedsc@cs.huji.ac.il}}
\author[2]{Shuncheng Yuan\thanks{Email: syuan@ucsd.edu}}

\affil[1]{The Hebrew University of Jerusalem}
\affil[2]{University of California San Diego}

\date{}
\begin{document}
\maketitle

\begin{abstract}
Fast matrix multiplication algorithms are asymptotically faster than the classical cubic-time algorithm, but they are often slower in practice.
One important obstacle is the use of complex coefficients, which increases arithmetic overhead and limits practical efficiency.
This paper focuses on transforming complex-coefficient matrix multiplication schemes into equivalent real- or rational-coefficient ones. 
We present a systematic method that, given a complex-coefficient scheme, either constructs a family of equivalent rational algorithms or proves that no equivalent rational scheme exists.
Our approach relies only on basic linear-algebraic properties of similarity transformations of complex matrices.
This method recovers the previously known ad hoc results of Dumas, Pernet, and Sedoglavic (2025) and extends them to more general settings, including algorithms involving rational coefficients and square roots, with $i = \sqrt{-1}$ as a special case. 

Using this framework, we show that no rational scheme is equivalent to the Smirnov's $\langle 4,4,9,104\rangle$ $\Q[\sqrt{161}]$ algorithm (2022) and that no real scheme is equivalent to the $\langle 4,4,4,48\rangle$ complex algorithm of Kaporin (2024). 
More generally, our approach can also be used to prove the non-existence of integer-coefficient schemes.
\end{abstract}

\section{Introduction}
\subsection{Background}
Matrix multiplication is a fundamental primitive used in many applications such as numerical linear algebra, machine learning, data analysis, and graph algorithms. 
The classical algorithm multiplies two $n\times n$ matrices using $O(n^3)$ operations. 
Strassen~\cite{Strassen1969} was the first to discover a sub-cubic $O(n^{\log_2 7})$ algorithm. 
Since then, a long line of work has improved the best known upper bounds on the matrix multiplication exponent $\omega$ (e.g., \cite{Pan1978, Bini1979, Schonhage1981, Romani1982, CoppersmithWinograd1981, Strassen1986, CoppersmithWinograd1990, Stothers2010, Williams2012, LeGall2014, AlmanWilliams2024, Duan2022, Williams2024, Alman2024}), where the current best bound is $\omega<2.371339$~\cite{Alman2024}. 
However, many of these improvements come with extremely large hidden constants and are relevant only for astronomically large matrix dimensions.

As a result, research on practical matrix multiplication has diverged into several directions.
One direction seeks to improve the asymptotic bounds while retaining applicability to matrices of feasible dimension. 
These techniques include trilinear aggregation~\cite{Pan1978,PAN80,PAN82,LPS92,DIS11,HS23,SZ25}, flip-graphs-based search~\cite{KM23,KM25,AIH24,MP25}, SAT-solver-based search~\cite{CBH11,HKS19,HKS21,Yan24}, alternating least squares~\cite{JM89,m33640,BB15}, zero-padding~\cite{DIS11,SED17,FBH22}, and reinforcement learning~\cite{FBH22,SL24,NVEE25}.
Recent work also studies the numerical stability of fast matrix multiplication algorithms~\cite{Bini1980, Demmel2006, Ballard2015, Vermeylen2024, Schwartz2024}, and aims to reduce the leading coefficient~\cite{AB1, beniamini2020sparsifying, AB2, maartensson2025number}. 

Some fast matrix multiplication algorithms rely on complex coefficients, including Kaporin’s algorithms~\cite{m44448c} and schemes discovered by the Google DeepMind team using the AlphaEvolve coding agent~\cite{NVEE25}. 
Arithmetic over complex numbers incurs higher computational overhead than arithmetic over the reals, which can limit the practical efficiency of such algorithms. 
Consequently, transforming a complex-coefficient scheme into an equivalent real-coefficient scheme can lead to substantial performance improvements in practice.

Following the discovery of complex-coefficient schemes in~\cite{NVEE25}, Dumas, Pernet, and Sedoglavic applied De-Groote actions to construct equivalent rational-coefficient schemes~\cite{m34763,m44448}. 
Their approach exploits symmetry properties of the tensors together with carefully chosen, problem-specific heuristics.

\subsection{Our Contribution}
Our main contribution is a necessary and sufficient condition for a complex-valued matrix multiplication scheme to admit an equivalent rational scheme (under De Groote actions).
We further provide an explicit algorithm that deccides whether such an equivalent rational form exists and constructs one when it does.
Using this method, we recover the results of Dumas, Pernet, and Sedoglavic.

We further establish new non-existence results.
In particular, we show that Smirnov’s scheme~\cite{m494104} over $\Q(\sqrt{161})$ admits no equivalent rational scheme, and that Kaporin’s complex-coefficient scheme~\cite{m44448c} admits no equivalent real scheme. 
In particular, we show that no rational scheme is equivalent to Smirnov’s algorithm over $\Q(\sqrt{161})$, and that no real scheme is equivalent to Kaporin’s complex algorithm~\cite{m44448c}.
We also extend our framework to study whether a rational scheme can be transformed into an equivalent integer scheme.
Using this extension, we prove that the schemes produced by AlphaEvolve~\cite{NVEE25} admit no integer equivalents, and neither do several known $\mathbb{Z}[1/2]$ schemes~\cite{m33640, Hopcroft1971, Ballard2015}.

\section{Preliminary}
\subsection{Notations}
Throughout this paper, we denote $M_{m,n}(R)$ and $M_n(R)$ to be the set of $m$-by-$n$ matrices and the set of $n$-by-$n$ square matrices whose entries are in the ring $R$, respectively. $GL_n(R) = \{A\in M_n(R):A \text{ invertible}\}$ denotes the general linear group. 

Let $\Fd = \{a+b\sqrt{d}:a,b\in\F\}$ be a degree 2 extension over $\F$ whose characteristic is 0 (in most cases $\F = \Q$ or $\R$). For an element $a +b\sqrt{d}\in\Fd$, define its conjugate $\overline{a+b\sqrt{d}}=a - b\sqrt{d}$. For a matrix $A\in M_{m,n}(\Fd)$, let $\overline{A}$ denote the entry-wise conjugate of $A$, and $A(i,j), A(i,:), A(:,j)$ denote the $(i,j)$-th entry, $i$-th row and $j$-th column of $A $ respectively. It is easy to verify that such conjugation satisfies the following properties.
\begin{proposition}
    For $a,b \in \Fd$, $A,B \in M_n(\Fd)$, $C\in GL_n(\Fd)$ the following holds.
    \begin{align*}
        \overline{ab} = \overline{a}\cdot\overline{b}\\
        \overline{a+b} = \overline{a}+\overline{b}\\
        \overline{AB} = \overline{A}\cdot\overline{B}\\
        \overline{C^{-1}} = \overline{C}^{-1}.
    \end{align*}
\end{proposition}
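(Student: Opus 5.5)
The plan is to verify the two scalar identities directly from the definition of conjugation, and then to lift them to matrices entrywise. Throughout we use that every element of $\Fd$ has a unique representation $a+b\sqrt{d}$ with $a,b\in\F$. This holds because $\Fd$ is a degree 2 extension, so $\sqrt{d}\notin\F$ and $\{1,\sqrt{d}\}$ is an $\F$-basis. Uniqueness is exactly what makes the conjugate well defined, so I will note it first.

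For the scalar identities, write $a=a_1+a_2\sqrt{d}$ and $b=b_1+b_2\sqrt{d}$ with $a_i,b_i\in\F$.

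\emph{Additivity.} We have $a+b=(a_1+b_1)+(a_2+b_2)\sqrt{d}$. Hence
\[
\overline{a+b}=(a_1+b_1)-(a_2+b_2)\sqrt{d}=\overline{a}+\overline{b}.
\]

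\emph{Multiplicativity.} Expanding gives $ab=(a_1b_1+d\,a_2b_2)+(a_1b_2+a_2b_1)\sqrt{d}$. Expanding $\overline{a}\,\overline{b}=(a_1-a_2\sqrt{d})(b_1-b_2\sqrt{d})$ gives the same $\F$-part and the negated $\sqrt{d}$-part. Both facts use only $(\sqrt{d})^2=d\in\F$. So conjugation is a ring automorphism of $\Fd$ fixing $\F$, and it is an involution.

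For the matrix product, I compare the $(i,j)$ entries using the two scalar identities:
\[
\overline{AB}(i,j)=\overline{\sum_k A(i,k)B(k,j)}=\sum_k \overline{A(i,k)}\,\overline{B(k,j)}=(\overline{A}\,\overline{B})(i,j).
\]
The same computation works for rectangular matrices of compatible sizes, although the statement only needs the square case.

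For the inverse, I apply the product rule to $CC^{-1}=I$ and $C^{-1}C=I$. Since $\overline{I}=I$ (because $\overline{0}=0$ and $\overline{1}=1$), this gives $\overline{C}\;\overline{C^{-1}}=I=\overline{C^{-1}}\;\overline{C}$. Therefore $\overline{C}$ is invertible and its inverse is $\overline{C^{-1}}$.

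There is no real obstacle here. The only step needing care is the well-definedness of conjugation, which rests on the uniqueness of the representation $a+b\sqrt{d}$ noted at the start.
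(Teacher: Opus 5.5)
Your proof is correct, and it is the routine verification the paper leaves implicit (the paper gives no proof and only says the properties are ``easy to verify''). You check the scalar identities by expanding in the basis $\{1,\sqrt{d}\}$, then get the product rule entrywise and the inverse rule from $\overline{C}\,\overline{C^{-1}}=\overline{CC^{-1}}=I$; your added remark that conjugation is well defined because $\sqrt{d}\notin\F$ is a point the paper does not state.
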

\subsection{The Matrix Multiplication Tensor and Encoding Matrices}
Matrix multiplication is essentially a bilinear map. There are two ways to encode a bilinear map. For a ring $R$, let $f: R^n\times R^m\to R^k$ be a bilinear scheme that performs $r$ ring multiplications. Then there exist $U\in M_{r,n}(R), V\in M_{r,m}(R), W\in M_{r,k}(R)$, such that for every $x\in R^n,y\in R^m$,
\begin{align}
    f(x,y) = W^T((Ux)\odot(Vy)),
\end{align}
where $\odot$ denotes the Hadamard product~\cite{strassenUVW}. We name $(U, V, W)$ the encoding matrices of the map $f$.

Another way is to encode the map into a tensor. From the above encoding matrices $(U, V, W)$, the following tensor encodes the map $f$ as well.
$\mathcal{M} = \sum_{j = 1}^r O_j\otimes P_j\otimes Q_j$
where $O_j = U(j,:), P_j = V(j,:),Q_j = W(j,:)$.

Specifically, in the context of matrix multiplication, let $f: M_{m,n}(R)\times M_{n,p}(R) \to M_{m,p}(R)$ be a matrix multiplication algorithm that performs $r$ multiplication of ring elements. We call this an $\langle m,n,p,r\rangle$-algorithm. In this case, $U \in M_{t,m\cdot n}(R),V\in M_{t,n\cdot p}(R),W\in M_{t,p\cdot m}(R)$. In the tensor decomposition, the vectorization of $O_j \in M_{m,n}(R),P_j\in M_{n,p}(R), Q_j^T \in M_{m,p}(R)$ are rows of $U,V,W$ respectively. Also, they satisfy the Brent equations below.
\begin{proposition}
Let $R$ be a ring. $U, V, W$ are the encoding matrices for an $\langle m,n,p,r\rangle$-algorithm, and $\sum_{j=1}^r O_j\otimes P_j\otimes Q_j$ is the corresponding encoding tensor decomposition if and only if for every $i_1,i_2 \in [m], j_1,j_2 \in [n], k_1,k_2\in[p]$,

\[\sum_{t=1}^r O_t(i_1,j_1)P_t(j_2,k_1)Q_t(k_2,i_2)
    = \delta_{i_1,i_2}\delta_{j_1,j_2}\delta_{k_1,k_2}\]
and equivalently
\[\sum_{t=1}^r U(t,(i_1,j_1))V(t,(j_2,k_1))W(t,(i_2,k_2)) = \delta_{i_1,i_2}\delta_{j_1,j_2}\delta_{k_1,k_2}\]

where $\delta_{i,j} = 1$ if $i=j$ and $0$ otherwise.
\end{proposition}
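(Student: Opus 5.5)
The plan is to expand both sides of the identity $AB = W^T((U\operatorname{vec}A)\odot(V\operatorname{vec}B))$ coordinatewise and compare coefficients of monomials. Fix the vectorization conventions first: $\operatorname{vec}A$ is indexed by pairs $(i,j)\in[m]\times[n]$, $\operatorname{vec}B$ by pairs $(j,k)$, and the output is indexed by pairs $(i,k)$. The rows of $U,V,W$ are the vectorizations of $O_t$, $P_t$ and $Q_t^T$, so $U(t,(i,j)) = O_t(i,j)$, $V(t,(j,k)) = P_t(j,k)$ and $W(t,(i,k)) = Q_t(k,i)$. With these identifications the two displayed systems of equations are literally the same system, so it suffices to show that the scheme computes $AB$ exactly when the second system holds.

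Next, write the $(i_2,k_2)$ output entry of the scheme as
\[\sum_{t=1}^r W(t,(i_2,k_2))\Big(\sum_{i_1,j_1}U(t,(i_1,j_1))A(i_1,j_1)\Big)\Big(\sum_{j_2,k_1}V(t,(j_2,k_1))B(j_2,k_1)\Big).\]
This is a bilinear form in the entries of $A$ and $B$. The coefficient of the monomial $A(i_1,j_1)B(j_2,k_1)$ is $\sum_t U(t,(i_1,j_1))V(t,(j_2,k_1))W(t,(i_2,k_2))$. The true product entry $(AB)(i_2,k_2) = \sum_j A(i_2,j)B(j,k_2)$ has coefficient $\delta_{i_1,i_2}\delta_{j_1,j_2}\delta_{k_1,k_2}$ on that same monomial.

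For sufficiency, assume the Brent equations hold. Then the two expressions agree as formal bilinear polynomials, so they agree under every evaluation. This holds for any ring $R$, including noncommutative ones, provided we keep the order $A$-entry before $B$-entry, which the Hadamard form does.

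For necessity, assume the scheme computes $AB$ for all inputs. Substitute $A = E_{i_1j_1}$ and $B = E_{j_2k_1}$, the matrix units. Then every sum collapses to a single term, and the $(i_2,k_2)$ entry gives exactly the Brent equation for the index tuple $(i_1,j_1,j_2,k_1,i_2,k_2)$.

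The only real subtlety is bookkeeping. One must check that the index placement $Q_t(k_2,i_2)$ versus $W(t,(i_2,k_2))$ is consistent with $Q_t^T$ being the vectorized row. One must also check that "encoding tensor decomposition" means the tensor equals $\sum \delta\,\delta\,\delta$ in the standard basis, which is the Brent system again. I expect this convention matching to be the main place to be careful; the algebra itself is routine.
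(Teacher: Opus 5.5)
Your argument is correct for commutative $R$, which is the only setting the paper uses ($\Q$, $\Fd$, $\C$, $\Z[1/2]$). It is the standard coefficient-comparison derivation of the Brent equations. The paper states this proposition without proof, so there is no alternative route to compare against. The convention check you defer is immediate:
\begin{itemize}
\item The coordinate of $\sum_{i,j,k}E_{ij}\otimes E_{jk}\otimes E_{ki}$ at position $((i_1,j_1),(j_2,k_1),(k_2,i_2))$ is exactly $\delta_{i_1,i_2}\delta_{j_1,j_2}\delta_{k_1,k_2}$.
\item The identification $W(t,(i,k))=Q_t(k,i)$ makes the two displayed systems coincide, as you say.
\end{itemize}

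You should retract the aside that sufficiency ``holds for any ring $R$, including noncommutative ones, provided we keep the order $A$-entry before $B$-entry.'' The $t$-th term of the $(i_2,k_2)$ output is $W(t,(i_2,k_2))\,U(t,(i_1,j_1))\,A(i_1,j_1)\,V(t,(j_2,k_1))\,B(j_2,k_1)$. You can collect $\sum_t UVW$ as the coefficient of $A(i_1,j_1)B(j_2,k_1)$ only if the coefficients commute with the entries and with each other. Keeping $A$ before $B$ is not enough.

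Here is a concrete failure. Take $m=n=p=r=1$, and let $u=1$, $v=x$, $w=x^{-1}$ for a noncentral unit $x$ (say in $M_2(\Q)$). The Brent equation $uvw=1$ holds, but the scheme computes $x^{-1}axb\neq ab$. The necessity direction fails similarly: your matrix-unit substitution produces $\sum_t WUV$, not $\sum_t UVW$. So the proposition, and your proof, must be read with the coefficients in a commutative ring that is central in the ring of entries. Examples are $R$ commutative, or block entries over a commutative coefficient ring. That is all the paper needs.
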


An example of a matrix multiplication algorithm is Strassen's algorithm for two $2\times 2$ matrix multiplication. For matrices
\begin{align*}
    A = \begin{bmatrix}
a_{11} & a_{12} \\
a_{21} & a_{22} 
\end{bmatrix},\qquad
B = \begin{bmatrix}
b_{11} & b_{12} \\
b_{21} & b_{22} 
\end{bmatrix},\qquad
C = \begin{bmatrix}
c_{11} & c_{12} \\
c_{21} & c_{22} 
\end{bmatrix}
\end{align*}
Strassen's algorithm to compute $AB=C$ is:
\[
\begin{array}{rl|rl}
p_1 &= (a_{1,1} + a_{2,2})(b_{1,1} + b_{2,2}) & 
c_{1,1} &= p_1 + p_4 - p_5 + p_7 \\[6pt]
p_2 &= (a_{2,1} + a_{2,2})(b_{1,1}) &
c_{1,2} &= p_3 + p_5 \\[6pt]
p_3 &= (a_{1,1})(b_{1,2} - b_{2,2}) &
c_{2,1} &= p_2 + p_4 \\[6pt]
p_4 &= (a_{2,2})(b_{2,1} - b_{1,1}) &
c_{2,2} &= p_1 - p_2 + p_3 + p_6 \\[6pt]
p_5 &= (a_{1,1} + a_{1,2})(b_{2,2}) & & \\[6pt]
p_6 &= (a_{2,1} - a_{1,1})(b_{1,1} + b_{1,2}) & & \\[6pt]
p_7 &= (a_{1,2} - a_{2,2})(b_{2,1} + b_{2,2}) & &
\end{array}
\]
Its encoding matrices are 
\begin{align}
    U = \begin{bmatrix}
1 & 0 & 0 & 1 \\
0 & 0 & 1 & 1 \\
1 & 0 & 0 & 0 \\
0 & 0 & 0 & 1 \\
1 & 1 & 0 & 0 \\
-1 & 0 & 1 & 0 \\
0 & 1 & 0 & -1 \\
\end{bmatrix},
 V = \begin{bmatrix}
1 & 0 & 0 & 1 \\
1 & 0 & 0 & 0 \\
0 & 1 & 0 & -1 \\
-1 & 0 & 1 & 0 \\
0 & 0 & 0 & 1 \\
1 & 1 & 0 & 0 \\
0 & 0 & 1 & 1 \\
\end{bmatrix},
W = \begin{bmatrix}
1 & 0 & 0 & 1 \\
0 & 0 & 1 & -1 \\
0 & 1 & 0 & 1 \\
1 & 0 & 1 & 0 \\
-1 & 1 & 0 & 0 \\
0 & 0 & 0 & 1 \\
1 & 0 & 0 & 0 \\
\end{bmatrix}
\end{align}
and the corresponding matrix multiplication tensor
$\sum_{j=1}^7 O_j\otimes P_j\otimes Q_j$ is
\begin{align*}
    &\begin{bmatrix}
1 & 0 \\
0 & 1 
\end{bmatrix}\otimes
\begin{bmatrix}
1 & 0 \\
0 & 1 
\end{bmatrix}\otimes
\begin{bmatrix}
1 & 0 \\
0 & 1 
\end{bmatrix}\\
+&\begin{bmatrix}
0 & 0 \\
1 & 1 
\end{bmatrix}\otimes
\begin{bmatrix}
1 & 0 \\
0 & 0 
\end{bmatrix}\otimes
\begin{bmatrix}
0 & 1 \\
0 & -1 
\end{bmatrix}\\
+&\begin{bmatrix}
1 & 0 \\
0 & 0 
\end{bmatrix}\otimes
\begin{bmatrix}
0 & 1 \\
0 & -1 
\end{bmatrix}\otimes
\begin{bmatrix}
0 & 0 \\
1 & 1 
\end{bmatrix}\\
+&\begin{bmatrix}
0 & 0 \\
0 & 1 
\end{bmatrix}\otimes
\begin{bmatrix}
-1 & 0 \\
1 & 0 
\end{bmatrix}\otimes
\begin{bmatrix}
1 & 1 \\
0 & 0 
\end{bmatrix}\\
+&\begin{bmatrix}
1 & 1 \\
0 & 0 
\end{bmatrix}\otimes
\begin{bmatrix}
0 & 0 \\
0 & 1 
\end{bmatrix}\otimes
\begin{bmatrix}
-1 & 0 \\
1 & 0 
\end{bmatrix}\\
+&\begin{bmatrix}
-1 & 0 \\
1 & 0 
\end{bmatrix}\otimes
\begin{bmatrix}
1 & 1 \\
0 & 0 
\end{bmatrix}\otimes
\begin{bmatrix}
0 & 0 \\
0 & 1 
\end{bmatrix}\\
+&\begin{bmatrix}
0 & 1 \\
0 & -1 
\end{bmatrix}\otimes
\begin{bmatrix}
0 & 0 \\
1 & 1 
\end{bmatrix}\otimes
\begin{bmatrix}
1 & 0 \\
0 & 0 
\end{bmatrix}.
\end{align*}
For matrices of size $2^k$ by $2^k$ for some integer $k$, Strassen's algorithm can be applied to $2^{k-1}$ by $2^{k-1}$ blocks and used to compute matrix multiplication recursively.
Another example is that any standard algorithm for $n\times m$ and $m\times p$ matrix multiplication is a matrix multiplication tensor
\begin{align*}
\sum_{j_1,j_2,j_3=1}^{n,m,p}E_{j_1j_2}^{(n,m)}\otimes{E_{j_2j_3}}^{(m,p)}\otimes E_{j_3j_1}^{(p,n)},
\end{align*}
where these matrices are standard basis matrices of matrix spaces.

\subsection{De-Groote Action}
Given a matrix multiplication tensor $\sum_{j=1}^rO_j\otimes P_j\otimes Q_j$, the following tensors $\sum_{j=1}^rP_j\otimes Q_j\otimes O_j$, $\sum_{j=1}^rP_j^T\otimes O_j^T\otimes Q_j^T$, and $\sum_{j=1}^rXO_jY^{-1}\otimes YP_jZ^{-1}\otimes ZQ_jX^{-1}$ are also matrix multiplication tensors for any invertible $X,Y,Z$~\cite{Groote1978,DG2}.
For a tensor, the following is true
\begin{align}
    \sum_{j=1}^r \alpha_j O_j\otimes P_j\otimes Q_j=\sum_{j=1}^r O_j\otimes \alpha_j P_j\otimes Q_j=\sum_{j=1}^r O_j\otimes P_j\otimes \alpha_j Q_j
\end{align}
We call these the De-Groote actions or De-Groote transformations.

If there exist $\alpha_j, \beta_j,\gamma_j$ such that all entries of $\sum_{j=1}^r \alpha_j O_j\otimes \beta_j P_j\otimes \gamma_j Q_j$ are in $\F$, then we say $\sum_{j=1}^r O_j\otimes P_j\otimes Q_j$ is in $\F$ up to a scalar.

\section{Results}
We show how De-Groote actions can be used to transform a matrix multiplication tensor from $\Fd$ to $\F$, e.g., $\Q[i]\to \Q$. In \ref{Section: alg}. We introduce our algorithm and its theoretical guarantees. In \ref{Section: int}, we show that most tensors in $\Q$ derived here cannot be further transformed to $\Z$.

\subsection{Rational De-Groote Equivalent Point}\label{Section: alg}
Let $\sum_{j=1}^r O_j\otimes P_j \otimes Q_j$ be a matrix multiplication tensor, we know that for any invertible $X, Y, Z$ with compatible sizes, $\sum_{j=1}^r XO_jY^{-1}\otimes YP_j Z^{-1}\otimes Z Q_jX^{-1}$ is still a matrix multiplication tensor because this is a kind of De-Groote action. Dumas, Pernet, and Sedoglavic discovered De-Groote actions that transform the $\langle 4,4,4,48\rangle$ and $\langle3,4,7,63\rangle$ complex tensors in~\cite{NVEE25} to rational tensors~\cite{m44448,m34763}. Their construction involves cleverly leveraging the symmetry of the tensors involved and making educated guesses. By contrast, we give an algorithm that not only tests when it is possible to use De-Groote actions to transform a complex tensor to a real one, but also, if it exists, gives a way to construct it.

Suppose $O_j \in M_{m,n}(\Fd), P_j\in M_{n,p}(\Fd),Q_j \in M_{p,m}(\Fd)$. If there exist $X\in GL_m(\Fd)$, $Y\in GL_n(\Fd)$, $Z\in GL_p(\Fd)$, such that $\sum_{j=1}^r XO_jY^{-1}\otimes YP_j Z^{-1}\otimes Z Q_jX^{-1}$ is in $\F$, then 
\begin{align}
    &(XO_jY^{-1})( YP_j Z^{-1})( Z Q_jX^{-1})=X(O_jP_jQ_j)X^{-1}\\
   &(YP_j Z^{-1})( Z Q_jX^{-1}) (XO_jY^{-1})=Y(P_jQ_jO_j)Y^{-1}\\
    &( Z Q_jX^{-1})(XO_jY^{-1})( YP_j Z^{-1})=Z(Q_jO_jP_j)Z^{-1}
\end{align}
are matrices with all entries in $\F$. This means that there exist invertible $X, Y, Z$ such that 
\begin{align}
    X(O_jP_jQ_j)X^{-1}\in M_{n}(\F),\\
    \,Y(P_jQ_jO_j)Y^{-1}\in M_{m}(\F),\\
    Z(Q_jO_jP_j)Z^{-1}\in M_{p}(\F)
\end{align} for all $j = 1,\dots,r$. 
Theorem \ref{thm: necessary} below shows that if there exists no $S$ such that $SO_jP_jQ_j = \overline{O_jP_jQ_j}S$ and $S\overline{S} = I$ for all $j$, then there is no such $X$. The existence of $Y, Z$ is similar. Finding such an $S$ is equivalent to solving a system of linear equations. Once this $S$ is found, then inspired by the proof of Theorem \ref{thm: sufficiency} below, we can construct $X$ from $S$ by solving a system of linear equations. 

\begin{lemma}\label{lemma: necessary}
Let $A\in M_{n}(\Fd)$. If there exists $X\in GL_n(\Fd)$, such that $XAX^{-1} \in M_{n}(\mathbb{\F})$, then there exist $S\in GL_n(\Fd)$, such that $SA = \bar{A}S$ and $S\overline{S} = I$.
\end{lemma}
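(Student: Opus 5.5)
The plan is to build $S$ directly from $X$ and its conjugate. Put $B = XAX^{-1}$, which lies in $M_n(\F)$ by hypothesis. Since conjugation fixes $\F$ pointwise, $\overline{B} = B$. On the other hand, the multiplicativity of conjugation and $\overline{C^{-1}} = \overline{C}^{-1}$ (the first Proposition) give
\[
\overline{B} = \overline{X}\,\overline{A}\,\overline{X}^{-1}.
\]
Equating the two expressions yields
\[
XAX^{-1} = \overline{X}\,\overline{A}\,\overline{X}^{-1},
\]
and rearranging gives
\[
\overline{X}^{-1}X A = \overline{A}\,\overline{X}^{-1}X.
\]
This suggests the choice
\[
S = \overline{X}^{-1}X.
\]

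Next I check the required properties. Invertibility is clear: $\overline{X}$ is invertible with inverse $\overline{X^{-1}}$, so $S \in GL_n(\Fd)$ as a product of invertible matrices. The intertwining relation $SA = \overline{A}S$ is exactly the rearranged identity above. For the involution condition, conjugation is an involution ($\overline{\overline{x}} = x$, since $b \mapsto -b \mapsto b$), so $\overline{S} = X^{-1}\overline{X}$. Therefore
\[
S\overline{S} = \overline{X}^{-1}X X^{-1}\overline{X} = I.
\]

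There is no real obstacle here; the argument is a direct computation. The only points needing care are the conjugation identities (including that conjugation fixes $\F$ and is an involution) and the order of factors in $S$, so that both $SA = \overline{A}S$ and $S\overline{S} = I$ come out in the stated orientation. For instance, $X\overline{X}^{-1}$ would fail the intertwining relation, so the order matters.
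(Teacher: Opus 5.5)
Your proposal is correct and follows the paper's proof: you set $B = XAX^{-1}$, use $\overline{B} = B$ to get $XAX^{-1} = \overline{X}\,\overline{A}\,\overline{X}^{-1}$, and take $S = \overline{X}^{-1}X$. You also check $S\overline{S} = I$ and the invertibility of $S$ explicitly, which the paper leaves implicit.
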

\begin{proof}
    Let $R = XAX^{-1}$. Since $R$ is in $\F$,
    \begin{align}
        XAX^{-1} = R=\bar{R} = \overline{XAX^{-1}} =\overline{X}\cdot\overline{A}\cdot \overline{X}^{-1}
    \end{align}
    The proof is complete taking $S = \overline{X}^{-1}X$.
\end{proof}

Applying this lemma to our context is the following.
\begin{theorem}\label{thm: necessary}
    If there is no $S$ such that $S(O_jP_jQ_j) = \overline{O_jP_jQ_j}S$ with $\overline{S}S =I$ simultaneously for $j =1,\dots,r$, or similarly with $P_jQ_jO_j$ or $Q_jO_jP_j$, then there are no $X,Y,Z$ such that $XO_jY^{-1}\otimes YP_j Z^{-1}\otimes Z Q_jX^{-1}$ are all in $\F$.
\end{theorem}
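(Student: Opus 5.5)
We argue by contraposition: assume there are $X\in GL_m(\Fd)$, $Y\in GL_n(\Fd)$, $Z\in GL_p(\Fd)$ for which every factor $XO_jY^{-1}$, $YP_jZ^{-1}$, $ZQ_jX^{-1}$ has all entries in $\F$. From this we will produce a \emph{single} $S$ that satisfies $S(O_jP_jQ_j)=\overline{O_jP_jQ_j}\,S$ for all $j$ and has $\overline{S}S=I$. The cyclic products $P_jQ_jO_j$ and $Q_jO_jP_j$ are handled identically, using $Y$ and $Z$ in place of $X$.

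The first step is the telescoping identity already written before Lemma~\ref{lemma: necessary}:
\[
X(O_jP_jQ_j)X^{-1}=(XO_jY^{-1})(YP_jZ^{-1})(ZQ_jX^{-1}).
\]
The right-hand side is a product of matrices over $\F$, and $\F$ is closed under multiplication, so $R_j:=X(O_jP_jQ_j)X^{-1}$ lies in $M_m(\F)$ for every $j$. Since conjugation fixes $\F$ pointwise, $R_j=\overline{R_j}$.

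The second step applies the argument of Lemma~\ref{lemma: necessary} to all $j$ at once. The lemma as stated gives an $S$ for one matrix only. We need one $S$ that works for every $j$, so we reuse its proof rather than its statement. By the conjugation proposition, $\overline{R_j}=\overline{X}\,\overline{O_jP_jQ_j}\,\overline{X}^{-1}$. Combining this with $R_j=\overline{R_j}$ and $R_j=X(O_jP_jQ_j)X^{-1}$ gives
\[
X(O_jP_jQ_j)X^{-1}=\overline{X}\,\overline{O_jP_jQ_j}\,\overline{X}^{-1}.
\]
Set $S=\overline{X}^{-1}X$; this choice does not depend on $j$. Multiplying the last display on the left by $\overline{X}^{-1}$ and on the right by $X$ yields $S(O_jP_jQ_j)=\overline{O_jP_jQ_j}\,S$ for every $j$. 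It remains to check the normalization. Using $\overline{\overline{X}}=X$ and $\overline{C^{-1}}=\overline{C}^{-1}$, we get $\overline{S}=X^{-1}\overline{X}$, and hence
\[
\overline{S}S=X^{-1}\overline{X}\,\overline{X}^{-1}X=I.
\]
Thus the hypothesized $X,Y,Z$ produce a common $S$, contradicting the assumption that none exists; the same holds with $Y$ and $Z$ for the other two cyclic products.

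The one point needing care is this uniformity in $j$. The theorem quantifies over a single $S$ that works for all $j$ simultaneously, whereas Lemma~\ref{lemma: necessary} only supplies an $S$ for each matrix separately. This is resolved by observing that the $S$ built in the lemma's proof depends only on $X$, and $X$ is common to all $j$.
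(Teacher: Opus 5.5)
Your proof is correct and follows the paper's route. The telescoping identity gives $X(O_jP_jQ_j)X^{-1}\in M_m(\F)$, and the choice $S=\overline{X}^{-1}X$ from the proof of Lemma~\ref{lemma: necessary} then yields both $S(O_jP_jQ_j)=\overline{O_jP_jQ_j}\,S$ and $\overline{S}S=I$; your remark that $S$ depends only on $X$ makes explicit the uniformity in $j$ that the paper leaves implicit when it says the theorem follows by ``applying this lemma.''
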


\begin{theorem}\label{thm: sufficiency}
   For $A\in M_{n,n}(\Fd)$, if there exists $S\in GL_n(\Fd)$ such that $SA = \overline{A}S $ and $S\overline{S} =I$, then there exists $X\in GL_n(\Fd)$, such that $XAX^{-1}\in M_{n}(\mathbb{\F})$.
\end{theorem}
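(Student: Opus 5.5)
This is Hilbert 90 for the quadratic extension $\Fd/\F$. We need $X\in GL_n(\Fd)$ with $S=\overline{X}^{-1}X$, the converse of the construction in Lemma \ref{lemma: necessary}. Once such an $X$ exists, we get $\overline{X}S = X$, and then
\[
\overline{XAX^{-1}} = \overline{X}\,\overline{A}\,\overline{X}^{-1} = \overline{X}\,SAS^{-1}\,\overline{X}^{-1} = XAX^{-1},
\]
using $SA=\overline{A}S$ in the form $\overline{A} = SAS^{-1}$. A matrix fixed by conjugation has entries $a+b\sqrt d$ with $b=0$, so $XAX^{-1}\in M_n(\F)$. The whole proof therefore reduces to the cocycle statement: if $S\overline{S}=I$, then $S=\overline{X}^{-1}X$ for some invertible $X$.

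\textbf{Step 1: define the candidate.} Following the additive version of Hilbert 90, for any $C\in M_n(\Fd)$ set $X_C = C + \overline{S}\,\overline{C}$. Note that $\overline{S}S = \overline{S\overline{S}} = I$. Then
\[
\overline{S}X_C = \overline{S}C + \overline{S}\,\overline{S}\,\overline{C}
\quad\text{and}\quad
\overline{X_C} = \overline{C} + S C,
\]
so
\[
S\,\overline{X_C} = S\overline{C} + S S C .
\]
These expressions do not immediately match, so the definition needs adjusting. Choose instead
\[
X_C = \overline{C} + S C .
\]
Then
\[
\overline{X_C} = C + \overline{S}\,\overline{C}
\quad\text{and}\quad
S\,\overline{X_C} = SC + S\overline{S}\,\overline{C} = SC + \overline{C} = X_C .
\]
Hence $\overline{X_C}=S^{-1}X_C$, which gives $S=X_C\overline{X_C}^{-1}$, or equivalently $\overline{X_C} = \overline{S}X_C$.

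\textbf{Step 2: fix the conjugation convention.} We want $\overline{X}S = X$ for the final computation. Take $X = \overline{X_C}$. Then $\overline{X}S = X_C S$, which is not the same as $X$, so this does not work directly. Work instead with $X' = X_C^{-1}$, once $X_C$ is invertible. From $X_C = S\overline{X_C}$ we get $X_C^{-1} = \overline{X_C}^{-1}S^{-1}$, that is, $X' = \overline{X'}S^{-1}$. Now replace $S^{-1}=\overline{S}$ by its conjugate: we have $\overline{X'} = X'\overline{S}^{-1} = X'S$. Checking directly, $\overline{X'}\,\overline{A}\,\overline{X'}^{-1} = X'SAS^{-1}X'^{-1} = X'AX'^{-1}$, so $X=X'$ works. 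This bookkeeping is routine, and I would recheck each equation in the final write-up.

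\textbf{Step 3: invertibility (the main obstacle).} It remains to find $C$ with $X_C=\overline{C}+SC$ invertible. Try $C=I$ and $C=\sqrt d\,I$. These give
\[
X_1 = I+S
\quad\text{and}\quad
X_{\sqrt d} = \sqrt d\,(S-I).
\]
These two matrices need not be invertible individually, so a vector-space argument is needed. The map $v\mapsto S\overline{v}$ is $\F$-linear on $\Fd^n$ and squares to the identity, since $S\overline{S\overline{v}} = S\overline{S}v = v$. Hence $\Fd^n$ splits into its $+1$ and $-1$ eigenspaces. Multiplication by $\sqrt d$ swaps these eigenspaces, because $S\overline{\sqrt d v} = -\sqrt d\,S\overline v$. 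So the fixed space $V$ has $\F$-dimension $n$ and spans $\Fd^n$ over $\Fd$. An $\F$-basis of $V$ is then an $\Fd$-basis of $\Fd^n$: linear independence over $\Fd$ follows from the $\pm1$ eigenspace decomposition. The vectors $v+S\overline v$ and $\sqrt d\,(v - S\overline v)$ range over these eigenspaces.

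Let $B$ be the matrix whose columns form this basis. It is invertible and satisfies $S\overline{B}=B$, so it can serve directly as $X_C$ in the relations above. Alternatively, one can argue via a Zariski/generic choice: $\det(\overline{C}+SC)$ is a nonzero polynomial in the $\F$-coordinates of $C$, and $\F$ is infinite because it has characteristic $0$, so some choice of $C$ makes $X_C$ invertible. Either argument finishes the proof. The constructive basis version also matches the paper's remark that $X$ can be computed by solving a linear system, namely $S\overline{v}=v$.
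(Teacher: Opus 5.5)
Your overall strategy, realizing $S$ as $\overline{X}^{-1}X$ through a basis of the fixed space of an antilinear involution, is the paper's argument written with columns instead of rows. Steps 1 and 3 are fine: $X_C=\overline{C}+SC$ and the basis matrix $B$ both satisfy $S\overline{B}=B$, and your eigenspace argument correctly shows that $B$ is invertible.

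The gap is in Step 2, where you convert this column relation into the row relation $\overline{X}S=X$ that your final computation needs. Your choice $X'=X_C^{-1}$ satisfies $\overline{X'}=X'S$, i.e.\ $\overline{X'}^{-1}X'=S^{-1}=\overline{S}$. So it realizes $\overline{S}$, not $S$. Your verification line hides this behind an algebra slip. Correctly, $\overline{X'}\,\overline{A}\,\overline{X'}^{-1}=X'S\overline{A}S^{-1}X'^{-1}$; you dropped the bar on $A$, and your next equality would need $SAS^{-1}=A$, whereas in fact $SAS^{-1}=\overline{A}$. Since $\overline{A}=SAS^{-1}$, the correct expression is $X'S^2AS^{-2}X'^{-1}$. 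This equals $X'AX'^{-1}$ only when $S^2$ commutes with $A$.

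Here is a concrete failure. Take $\F=\Q$, $d=-1$, $S=\begin{pmatrix}1&2i\\0&1\end{pmatrix}$ and $A=\begin{pmatrix}1&-i\\0&2\end{pmatrix}$. Then $S\overline{S}=I$, $SA=\overline{A}S$, and $C=I$ gives the invertible $X_C=I+S$. But $X_C^{-1}AX_C=\begin{pmatrix}1&-2i\\0&2\end{pmatrix}\notin M_2(\Q)$.

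The fix is a one-line change: take $X=\overline{X_C}^{-1}=X_C^{-1}S$, or $\overline{B}^{-1}$ in the basis version. Then $\overline{X}S=X_C^{-1}S=X$, equivalently $\overline{X}^{-1}X=X_C\overline{X_C}^{-1}=S$, and your opening computation applies. In the example this gives $XAX^{-1}=\mathrm{diag}(1,2)$. Alternatively, use the paper's row convention $x\mapsto\overline{x}S$. The matrix whose rows are an $\F$-basis of that fixed space satisfies $\overline{X}S=X$ directly, with no inversion.

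A minor point: your Zariski alternative needs a reason why $\det(\overline{C}+SC)$ is not identically zero. That reason is the basis argument itself (take $C=\overline{B}/2$, so $X_C=B$), so it is not an independent route.
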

We use two lemmas to prove Theorem 2.

\begin{lemma}\label{lemma: anti-linear}
    In this lemma, we refer to vectors as row vectors. Let $S\in GL_n(\Fd)$ be a matrix such that $S\overline{S} = I$. Then $f:\Fd^n\to \Fd^n$, $f(x) = \overline{x}S$ is an anti-linear involution, i.e. for $\alpha,\beta\in\Fd$ and $x,y\in \Fd^n$
        \begin{align}
            f(\alpha x+\beta y) =\overline{\alpha}f(x) + \overline{\beta}f(y) \qquad &\text{(anti-linearity)}\\
            f\circ f = id_{\Fd^n}\qquad & \text{(involution)}.
        \end{align}
\end{lemma}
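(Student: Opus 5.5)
The plan is to verify the two properties directly from the definition $f(x)=\overline{x}S$, using only the conjugation rules collected in the first Proposition of the preliminaries. Row vectors are $1\times n$ matrices, and conjugation of a product of a scalar, a row vector and a matrix behaves multiplicatively and additively entry by entry. So the argument is a short computation in two steps, one for each property.

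\textbf{Anti-linearity.} For $\alpha,\beta\in\Fd$ and $x,y\in\Fd^n$, I first use additivity and multiplicativity of conjugation entrywise to write $\overline{\alpha x+\beta y}=\overline{\alpha}\,\overline{x}+\overline{\beta}\,\overline{y}$. Multiplying on the right by $S$ and using distributivity of matrix multiplication, together with the fact that scalars commute past $S$, then gives
\begin{align*}
f(\alpha x+\beta y)=\overline{\alpha}\,\overline{x}S+\overline{\beta}\,\overline{y}S=\overline{\alpha}f(x)+\overline{\beta}f(y).
\end{align*}

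\textbf{Involution.} For $x\in\Fd^n$, I apply $f$ twice. The conjugate of a product is the product of conjugates (the third identity of the Proposition, valid for rectangular products since it is entrywise), and conjugation is itself an involution: $\overline{\overline{a+b\sqrt d}}=a+b\sqrt d$. Hence
\begin{align*}
f(f(x))=\overline{\overline{x}S}\,S=\overline{\overline{x}}\,\overline{S}\,S=x\,\overline{S}S.
\end{align*}
The only point needing care is that the hypothesis is $S\overline{S}=I$, whereas the computation produces $\overline{S}S$. I would resolve this by conjugating the hypothesis: $\overline{S}\,\overline{\overline{S}}=\overline{I}=I$, i.e.\ $\overline{S}S=I$. (Alternatively, $S\overline{S}=I$ makes $\overline{S}=S^{-1}$ for a square matrix, so $\overline{S}S=I$ as well.) Therefore $f(f(x))=x$ for all $x$, so $f\circ f=id_{\Fd^n}$.

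There is no real obstacle. The one step to watch is the order of the factors in the involution check: $\overline{S}S$ rather than $S\overline{S}$. It is handled by conjugating the hypothesis, or by noting that a one-sided inverse of a square matrix is two-sided.
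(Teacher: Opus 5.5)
Your proposal is correct and follows the paper's proof essentially verbatim: a direct entrywise computation for anti-linearity, and the chain $f(f(x))=\overline{\overline{x}S}\,S=x\,\overline{S}S=x$ for the involution. Your explicit check that $\overline{S}S=I$ follows from the hypothesis $S\overline{S}=I$ (by conjugating it, or because a one-sided inverse of a square matrix is two-sided) fills in a step the paper uses without comment.
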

\begin{proof}
Let $\alpha,\beta\in\Fd$ and $x,y\in \Fd^n$.

Anti-linearity:
\[
f(\alpha x+\beta y) = (\overline{\alpha x+\beta y})S = \overline{\alpha x}S+\overline{\beta y}S=\overline{\alpha}f(x) + \overline{\beta}f(y)
\]

Involution:
\[
f(f(x)) = f(\overline{x}S) =\overline{\overline{x}S }S  =x \overline{S}S = x
\]

\end{proof}

\begin{lemma}\label{lemma: subspace}
    Let $f:\Fd^n \to \Fd^n$ be an anti-linear involution and let $\Fix(f) =\{x\in \Fd: f(x) = x\}$ be the set of fixed points of $f$. Then
        $\Fix(f)$ is an $n$-dimensional subspace \textbf{over} $\mathbb{\F}$.
\end{lemma}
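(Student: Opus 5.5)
The plan is the standard Galois-descent argument. Write $V=\Fd^n$ and $W=\Fix(f)$, where the fixed-point set is understood inside $\Fd^n$. The proof has three steps: show $W$ is an $\F$-subspace, show $V=W\oplus\sqrt{d}\,W$, and then compare dimensions.

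First, $W$ is an $\F$-subspace. Take $x,y\in W$ and $a,b\in\F$. Since $\overline{a}=a$ and $\overline{b}=b$, anti-linearity from Lemma \ref{lemma: anti-linear} gives
\[
f(ax+by)=af(x)+bf(y)=ax+by .
\]
Also $0\in W$.

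Second, $V=W+\sqrt{d}\,W$. For any $v\in V$, set
\[
w_1=\tfrac12\bigl(v+f(v)\bigr),\qquad w_2=\tfrac{1}{2\sqrt{d}}\bigl(v-f(v)\bigr).
\]
These make sense because $\mathrm{char}\,\F=0$ and $\sqrt d\neq 0$. By the involution property, $f(w_1)=\tfrac12(f(v)+v)=w_1$. For $w_2$, use $\overline{1/(2\sqrt d)}=-1/(2\sqrt d)$ to get
\[
f(w_2)=-\tfrac{1}{2\sqrt d}\bigl(f(v)-v\bigr)=w_2 .
\]
Then $v=w_1+\sqrt{d}\,w_2$.

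The sum is also direct. Suppose $w\in W\cap\sqrt d\,W$, say $w=\sqrt d\,u$ with $u\in W$. Then
\[
w=f(w)=-\sqrt d\,f(u)=-\sqrt d\,u=-w,
\]
so $w=0$. Hence $V=W\oplus\sqrt d\,W$ as $\F$-vector spaces, and $w\mapsto\sqrt d\,w$ is an $\F$-linear isomorphism $W\to\sqrt d\,W$.

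Third, count dimensions over $\F$. We have $\dim_\F V=2n$, since $\Fd$ has dimension $2$ over $\F$. The decomposition then gives $2\dim_\F W=2n$, so $\dim_\F W=n$.

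The $\F$-dimension count is not quite enough for the later use in constructing $X$. There one wants an $\F$-basis of $W$ that is also an $\Fd$-basis of $V$. This follows from the same decomposition.

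Suppose $w_1,\dots,w_k\in W$ are $\F$-independent and $\sum(a_i+b_i\sqrt d)w_i=0$ with $a_i,b_i\in\F$. The part $\sum a_iw_i$ lies in $W$ and the part $\sum b_i\sqrt d\,w_i$ lies in $\sqrt d\,W$. Directness forces $\sum a_iw_i=0$ and $\sum b_iw_i=0$, so all $a_i=b_i=0$. Thus any $\F$-independent set in $W$ is $\Fd$-independent, and an $\F$-basis of $W$ is an $\Fd$-basis of $V$.

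The main obstacle is keeping the anti-linearity signs right, specifically that conjugation negates $1/\sqrt d$, so that $w_2$ really is fixed. This requires $\sqrt d\notin\F$, which holds because $\Fd$ is a genuine degree-2 extension.
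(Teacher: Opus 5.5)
Your proof is correct and follows the paper's argument essentially verbatim: the same $\F$-subspace check, the same splitting $v=\tfrac12(v+f(v))+\tfrac12(v-f(v))$ with the second piece shown to lie in $\sqrt{d}\,\Fix(f)$, and the same trivial-intersection argument, followed by a dimension count that you make explicit via $\dim_\F \Fd^n=2n$ and $\Fix(f)\cong\sqrt{d}\,\Fix(f)$. Your closing observation that an $\F$-basis of $\Fix(f)$ is automatically an $\Fd$-basis of $\Fd^n$ is a useful addition, since the paper relies on it without proof for the invertibility of $X$ in the proof of Theorem~\ref{thm: sufficiency}.
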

\begin{proof}
    For $a,b \in \Fix(f)$, and $\alpha \in \F$,
    \begin{align*}
        &f(a+b) = f(a) + f(b) = a + b\implies a+b \in \Fix(f)\\
        &f(\alpha b) = \alpha f(b) = \alpha b \implies \alpha b\in \Fix(f).
    \end{align*}
    This shows that $\Fix(f)$ is a subspace over $\F$. Next, we show that $\Fix(f) \bigoplus \sqrt{d}\Fix(f) = \Fd^n$, where $\sqrt{d}\Fix(f) = \{\sqrt{d}a:a\in \Fix{(f)}\}$.\\
    For any $a \in \Fd^n$, $a = a_1 + a_2$, where
    \begin{align*}
        a_1 = \frac{a+f(a)}{2},\qquad a_2 = \frac{a-f(a)}{2}.
    \end{align*}
    the following holds
    $f(a_1) = f\left(\frac{a+f(a)}{2}\right) = \frac{1}{2}[f(a) + f(f(a))] = a_1$, and therefore, $a_1\in \Fix(f)$. In addition, 
    \[
    f\left(\frac{\sqrt{d}}{d}a_2\right) = -\frac{\sqrt{d}}{d}f\left(\frac{a - f(a)}{2}\right)= -\frac{\sqrt{d}}{d}\frac{1}{2}[f(a) -a ] = \frac{\sqrt{d}}{d}a_2
    \]
    thus, $a_2 \in\sqrt{d}\Fix(f)$.
    If $a \in \Fix(f) \cap\sqrt{d}\Fix(f)$
    \begin{align*}
         \frac{\sqrt{d}}{d}a = f\left(\frac{\sqrt{d}}{d}a\right) = -\frac{\sqrt{d}}{d}f(a) = -\frac{\sqrt{d}}{d}a.
    \end{align*}
    Thus, the intersection is the trivial space, and this is a direct sum, which implies that the dimension of $\Fix(f)$ is $n$. 
\end{proof}

\begin{proof}[Proof of Theorem \ref{thm: sufficiency}]
        This theorem appears in~\cite{Horn_Johnson_2012} and some other textbooks. The proof uses the Jordan normal form. However, since the proof idea is a key component to construct $X$ from $S$, we state a constructive proof here.
        
        From the proof of Lemma \ref{lemma: necessary}, we see $S$ and $X$ satisfy \begin{align}
            \overline{X}S = X
        \end{align}
        meaning rows $x_j$ of $X$ satisfy $\overline{x_j}S = x_j$. This serves as the intuition for the construction.
        Define $f: \Fd^n \to \Fd^n, f(x) = \overline{x}S$. By Lemma \ref{lemma: anti-linear}, this is an anti-linear involution, and by Lemma \ref{lemma: subspace}, 
        $\Fix(f)$ is an $n$-dimensional subspace \textbf{over} $\mathbb{\F}$.
        Let $x_1,x_2,\dots,x_n$ be an $\F$-basis of $\Fix(f)$. Form $X = [x_1^T,x_2^T,\cdots,x_n^T]^T$, and this $X$ satisfies the desired property as
    \begin{align*}
            \overline{XAX^{-1}} &= \overline{X}\cdot\overline{A}\cdot\overline{X}^{-1}\\
            &=\overline{X}\cdot (S A S^{-1})\cdot\overline{X}^{-1}\\
            & = (\overline{X}S)A(\overline{X}S)^{-1}\\
            & = XAX^{-1}~,
        \end{align*}
        so by necessity $XAX^{-1}$ is in $M_n(\F)$. 
\end{proof}
The above theorems are summarized into Algorithm 1.

\begin{algorithm}[ht!]
        \caption{Find a $\F$ point in the orbit from an $\Fd$ tensor}
        \textbf{Input:} A matrix multiplication tensor $\sum_{j=1}^r O_j\otimes P_j \otimes Q_j$ with entries in $\Fd$
        \begin{algorithmic}
         \item 1. For each $j=1,...,r$, form $M_j = O_jP_jQ_j$.
         \item 2. Solve $SM_j = \overline{M_j}S$, $\overline{S}S=I$ $j=1,\dots,r$ simultaneously.
         \item 3. If $S$ exists\\
         \qquad Go to 4.\\
         \quad Else\\
         \qquad Output Null
        \item 4. Compute the basis $\{x_1,\dots,x_n\}$ for the  fixed point space of $x\to \overline{x}S$.
        \item 5. Form $X = [x_1;\dots;x_n]$.
        \item 6. Repeat 1-5 with $M_j =  P_jQ_jO_j$ and $M_j =  Q_jO_jP_j$ to form $Y,Z$.
        \end{algorithmic}
\end{algorithm}

At first glance, this algorithm only ensures that the matrices $UO_jP_jQ_jU^{-1}$, $VP_jQ_jO_jV^{-1}$, $WQ_jO_jP_jW^{-1}$ are all in $\F$. In principle, it might happen that for some $j$, $UO_jV^{-1}$ or $VP_jW^{-1}$ or $WQ_jU^{-1}$ are not in the form of $\alpha R$, where $\alpha\in\Fd$ and $R\in M(\F)$. For example,
\begin{align*}\label{eq: problem}
    XO_1Y^{-1} = \begin{bmatrix}
i & 0 \\
0 & 1 
\end{bmatrix}, YP_1Z^{-1} = \begin{bmatrix}
-i & 0 \\
0 & 1 
\end{bmatrix}, ZQ_1X^{-1} = \begin{bmatrix}
1 & 0 \\
0 & 1 
\end{bmatrix}
\end{align*}
satisfy that $XO_1P_1Q_1X^{-1}$ $YP_1Q_1O_1Y^{-1},ZQ_1O_1P_1Z^{-1}$ are all identity matrices, but it is not in $\F$ up to a scalar.

However, this cannot happen if the $S$ in the algorithm is unique up to a scalar $\alpha$ with $\alpha\overline{\alpha} = 1$, i.e. if $S, S'$ are both such that $SO_jP_jQ_j = \overline{O_jP_jQ_j}S$ and $S\overline{S} = I$, then $S = \alpha S'$ for some $\alpha\in \Fd$ with $\alpha\overline{\alpha} = 1$. To show this, we first prove a lemma.

\begin{lemma}\label{prop: Base Independence}
    For a matrix multiplication tensor $\sum_{j=1}^rO_j\otimes P_j\otimes Q_j$ over $\Fd$, suppose that there exists one triple $X, Y, Z$ that is derived from the algorithm in STEP 4, such that $XO_jY^{-1} = \alpha_1R_1$, $YP_jZ^{-1} = \alpha_2R_2$, $ZQ_jX^{-1} = \alpha_3 R_3$, where $\alpha_1,\alpha_2,\alpha_3\in\Fd$ with $\alpha_1\alpha_2\alpha_3\in\F$, and $R_1, R_2, R_3 \in M_n(\mathbb{F})$. Then, for every possible $\widetilde{X},\widetilde{Y},\widetilde{Z}$ derived from the algorithm, $\widetilde{X}O_j\widetilde{Y}^{-1}$, $\widetilde{Y}P_j\widetilde{Z}^{-1}$ and $\widetilde{Z}Q_j\widetilde{X}^{-1}$ have the same form.
\end{lemma}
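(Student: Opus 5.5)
The plan is to show that any two outputs of the algorithm differ by $\F$-rational changes of basis, up to unit scalars that the triple product does not see. Fix $S$ (resp. $S_Y$, $S_Z$) used to build $X$ (resp. $Y$, $Z$), and let $\widetilde{X},\widetilde{Y},\widetilde{Z}$ come from $\widetilde{S},\widetilde{S}_Y,\widetilde{S}_Z$.

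We work under the standing hypothesis of this discussion: the solution $S$ of Step 2 is unique up to a scalar $\lambda\in\Fd$ with $\lambda\overline{\lambda}=1$. So $\widetilde{S}=\lambda S$, and similarly $\widetilde{S}_Y=\mu S_Y$ and $\widetilde{S}_Z=\nu S_Z$.

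\textbf{Step 1: unit scalars are norms.} By Hilbert's Theorem~90 for the quadratic extension $\Fd/\F$, any $\lambda$ with $\lambda\overline{\lambda}=1$ can be written as $\lambda=\overline{\beta}/\beta$ for some $\beta\in\Fd^\times$.
\begin{itemize}
\item If $\lambda\neq-1$, take $\beta=1+\overline{\lambda}$; one checks $\overline{\beta}/\beta=\lambda$ directly.
\item If $\lambda=-1$, take $\beta=\sqrt{d}$.
\end{itemize}

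\textbf{Step 2: relate the fixed spaces.} The rows $x$ of $\widetilde{X}$ satisfy $\overline{x}\,\overline{\beta}S/\beta=x$, which is equivalent to $\overline{\beta x}S=\beta x$. Hence
\[
\Fix(x\mapsto\overline{x}\widetilde{S})=\beta^{-1}\Fix(x\mapsto\overline{x}S).
\]
Now let $x_1,\dots,x_n$ be the $\F$-basis forming $X$. The rows of $\widetilde{X}$ form another $\F$-basis of $\beta^{-1}\Fix(f)$. Therefore $\widetilde{X}=\beta^{-1}G_XX$ for some $G_X\in GL_n(\F)$, namely the change-of-basis matrix. In the same way, $\widetilde{Y}=\gamma^{-1}G_YY$ and $\widetilde{Z}=\delta^{-1}G_ZZ$ with $G_Y,G_Z$ rational.

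\textbf{Step 3: compute the new factors.} Substituting gives
\[
\widetilde{X}O_j\widetilde{Y}^{-1}=\tfrac{\gamma}{\beta}\,\alpha_1\,G_XR_1G_Y^{-1},
\]
and similarly $\widetilde{Y}P_j\widetilde{Z}^{-1}=\tfrac{\delta}{\gamma}\alpha_2G_YR_2G_Z^{-1}$ and $\widetilde{Z}Q_j\widetilde{X}^{-1}=\tfrac{\beta}{\delta}\alpha_3G_ZR_3G_X^{-1}$. Each factor is a scalar in $\Fd$ times a matrix over $\F$. The product of the three scalars is $\alpha_1\alpha_2\alpha_3\in\F$, because $\beta,\gamma,\delta$ cancel. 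This is exactly the claimed form.

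\textbf{Main obstacle.} The hardest part is Step 2. It needs the uniqueness of $S$ up to a unit scalar. Without that hypothesis, distinct $S$ give fixed spaces not related by a scalar, and the argument fails, as the diagonal example before the lemma shows. I would therefore state clearly that "derived from the algorithm" means a different choice of $\F$-basis in Step 4 together with a different unit-scalar normalization of $S$.

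The Hilbert~90 step also needs care when $\lambda=-1$. It is covered by $\beta=\sqrt{d}$, since $\overline{\sqrt{d}}/\sqrt{d}=-1$.
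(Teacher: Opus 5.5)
Your proof is correct, and its core matches the paper's. The paper keeps the same $S$ for $X$ and $\widetilde{X}$. It uses $\overline{X}S=X$, $\overline{\widetilde{X}}S=\widetilde{X}$ and $\overline{S}^{-1}=S$ to show $\overline{\widetilde{X}X^{-1}}=\widetilde{X}X^{-1}$, so $\widetilde{X}X^{-1}\in GL_n(\F)$. It then writes $\widetilde{X}O_j\widetilde{Y}^{-1}=\alpha_1(\widetilde{X}X^{-1})R_1(\widetilde{Y}Y^{-1})^{-1}$. Your Step~2 with $\lambda=1$ is the same fact, phrased as a change of basis between two $\F$-bases of $\Fix(f)$.

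Where you go further is in letting the algorithm return $\widetilde{S}=\lambda S$ with $\lambda\overline{\lambda}=1$. You absorb $\lambda$ via Hilbert~90 into the scalars $\gamma/\beta$, $\delta/\gamma$, $\beta/\delta$, whose product is $1$. The paper does not do this inside the lemma. It handles the rescaling in the subsequent theorem, using a matrix $T$ with $\overline{T}^{-1}T=\alpha I$ obtained from Theorem~\ref{thm: sufficiency}. Your explicit scalar choice $T=\beta^{-1}I$ is a cleaner, constructive version of that step, so your argument proves the lemma together with the key step of the next theorem.

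One correction to your framing: the uniqueness hypothesis is not part of the lemma and is not needed in the paper's reading (fixed $S$, arbitrary $\F$-basis in Step~4). There $\lambda=1$, and your argument goes through with no assumption on $S$. Uniqueness only becomes necessary for your broader reading, where different solutions of Step~2 may be used. So your ``main obstacle'' belongs to that generalization, not to the lemma as the paper proves it.
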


\begin{proof}
    Both $X$ and $\widetilde{X}$ satisfy
    \begin{align*}
        \overline{X}S = X\qquad  \overline{\widetilde{X}}S = \widetilde{X}.
    \end{align*}
    It follows that
    \begin{align*}
        \overline{\widetilde{X}X^{-1}} = \overline{\widetilde{X}}\overline{X^{-1}}= \overline{\widetilde{X}}(\overline{\overline{X}S)^{-1}}=\overline{\widetilde{X}}\overline{S^{-1}\overline{X}^{-1}}= (\overline{\widetilde{X}}S)X^{-1}=\widetilde{X}X^{-1}.
    \end{align*}
    Thus, $\widetilde{X}X^{-1}$ is in $\F$. Similar arguments show that $\widetilde{Y}Y^{-1}$ and $\widetilde{Z}Z^{-1}$ are real. Now
    \begin{align*}
        \widetilde{X}O_j\widetilde{Y}^{-1} = \widetilde{X}X^{-1}XO_jY^{-1}(\widetilde{Y}Y^{-1})^{-1} = \alpha_1 \widetilde{X}X^{-1}R_1(\widetilde{Y}Y^{-1})^{-1}.
    \end{align*}
    Repeating this with $\widetilde{Y}P_j\widetilde{Z}^{-1}$ and $\widetilde{Z}Q_j\widetilde{X}^{-1}$ completes the proof.
\end{proof}
This lemma shows that the choice of basis in STEP 4 of the algorithm does not matter. If one choice of basis works, then all bases work. The following theorem shows that the previous potential problem does not occur.
\begin{theorem}
        For a matrix multiplication tensor $\sum_{j=1}^rO_j\otimes P_j\otimes Q_j$ over $\Fd$, suppose there exist $X,Y,Z$ such that $XO_jY^{-1}\otimes VP_jW^{-1} \otimes WQ_jU^{-1}$ are all in $\F$ up to a scalar. If $S$ in the algorithm exists and is unique up to a scalar $\alpha$ with $\alpha\overline{\alpha} = 1$, then $\widetilde{X}O_j\widetilde{Y}^{-1}\otimes \widetilde{Y}P_j\widetilde{Z}^{-1} \otimes \widetilde{Z}Q_j\widetilde{X}^{-1}$ are all in $\F$ for any $\widetilde{X},\widetilde{Y},\widetilde{Z}$ computed by the algorithm.
    \end{theorem}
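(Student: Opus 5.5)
The plan is to reduce the statement to Lemma \ref{prop: Base Independence}. Concretely, I will show that the hypothesised triple $X,Y,Z$ can be rescaled into a triple that the algorithm could itself have produced in STEP 4, and that this rescaled triple has the required form. Then Lemma \ref{prop: Base Independence} transfers that form to every $\widetilde{X},\widetilde{Y},\widetilde{Z}$ computed by the algorithm.

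First, write $XO_jY^{-1}=\alpha_jR_j$, $YP_jZ^{-1}=\beta_jR_j'$ and $ZQ_jX^{-1}=\gamma_jR_j''$, with $R_j,R_j',R_j''$ over $\F$. The tensor $\sum_j XO_jY^{-1}\otimes YP_jZ^{-1}\otimes ZQ_jX^{-1}$ is a matrix multiplication tensor, so its entries lie in $\F$. When the rank-one term is nonzero, I will use this to argue that $\alpha_j\beta_j\gamma_j$ may be taken in $\F$. Absorbing scalars is harmless by the De-Groote scalar action, so I may assume $\alpha_j\beta_j\gamma_j\in\F$; this is exactly the hypothesis of Lemma \ref{prop: Base Independence}.

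Next, $X(O_jP_jQ_j)X^{-1}=\alpha_j\beta_j\gamma_jR_jR_j'R_j''$ lies in $M_n(\F)$ for every $j$. The proof of Lemma \ref{lemma: necessary} therefore shows that $S_X=\overline{X}^{-1}X$ satisfies $S_XM_j=\overline{M_j}S_X$ and $\overline{S_X}S_X=I$ for all $j$. Let $S$ be the matrix found by the algorithm in STEP 2. By the uniqueness hypothesis, $S=\lambda S_X$ with $\lambda\overline{\lambda}=1$.

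By Hilbert 90 for the quadratic extension $\Fd/\F$, there is a $\mu\in\Fd$ with $\lambda=\mu/\overline{\mu}$; explicitly, $\mu=1+\lambda$ when $\lambda\ne-1$, and $\mu=\sqrt d$ otherwise. Then $X'=\mu X$ satisfies $\overline{X'}S=\overline{\mu}\,\overline{X}\lambda\overline{X}^{-1}X=\mu X=X'$. So the rows of $X'$ lie in $\Fix(x\mapsto\overline{x}S)$, and since $X'$ is invertible they form an $\F$-basis of it by Lemma \ref{lemma: subspace}. Hence $X'$ is a valid output of STEP 4. The same argument gives $Y'=\mu'Y$ and $Z'=\mu''Z$.

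Rescaling $X,Y,Z$ by scalars only multiplies $XO_jY^{-1}$ by $\mu/\mu'$, and similarly for the other two factors. The product of the three scalar factors is $1$, so the form $\alpha R$ with product of scalars in $\F$ is preserved. Lemma \ref{prop: Base Independence} then gives that $\widetilde{X}O_j\widetilde{Y}^{-1}$, $\widetilde{Y}P_j\widetilde{Z}^{-1}$ and $\widetilde{Z}Q_j\widetilde{X}^{-1}$ have the same form, since $\widetilde{X}X'^{-1}$, $\widetilde{Y}Y'^{-1}$ and $\widetilde{Z}Z'^{-1}$ are over $\F$. Consequently each term $\widetilde{X}O_j\widetilde{Y}^{-1}\otimes\widetilde{Y}P_j\widetilde{Z}^{-1}\otimes\widetilde{Z}Q_j\widetilde{X}^{-1}$ is a product of a scalar in $\F$ with $\F$-matrices, and so lies in $\F$.

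The main obstacle I expect is the first step: justifying that "in $\F$ up to a scalar" yields scalars whose product is in $\F$. This needs care when several $\alpha R$ representations exist or some $R$ vanishes. I would handle it by choosing a nonzero entry of each factor and comparing with the rational tensor.
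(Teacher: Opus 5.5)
The gap is in your first step. That $\sum_j XO_jY^{-1}\otimes YP_jZ^{-1}\otimes ZQ_jX^{-1}$ has entries in $\F$ does not force $\alpha_j\beta_j\gamma_j\in\F$ for an individual term. Irrational parts can cancel between different terms, so comparing an entry of one term with the total tensor tells you nothing. The De-Groote scalar action cannot repair this either, because it leaves $\alpha_j\beta_j\gamma_j$ unchanged up to a factor in $\F$.

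Here is a concrete counterexample. Append to Strassen's algorithm the two terms $\pm\sqrt{d}\,E_{12}\otimes E_{12}\otimes E_{12}$. The result is still a matrix multiplication tensor. The new terms have all three cyclic products equal to $0$, so the equations for $S$ are exactly those of Strassen's algorithm. Their only solutions are $\lambda I$ with $\lambda\overline{\lambda}=1$, since the products $O_jP_jQ_j$ include $E_{22}$ and $E_{11}+E_{12}$, whose common commutant is the scalars. With $X=Y=Z=I$, every factor is a scalar multiple of an $\F$-matrix. Yet every $\widetilde{X},\widetilde{Y},\widetilde{Z}$ the algorithm can produce is a scalar multiple of an $\F$-matrix, so the new term stays $\sqrt{d}$ times a nonzero $\F$-tensor and is not in $\F$. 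So under the reading you adopt (each factor separately a scalar times an $\F$-matrix), your first step is false, and so is the statement.

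The repair is to read the hypothesis as the paper intends. ``In $\F$ up to a scalar'' means that after a De-Groote rescaling (scalars with product $1$) all three factors of each term are over $\F$. That gives $\alpha_j\beta_j\gamma_j\in\F$ directly, with nothing to prove; the paper's proof simply starts from ``$\sum_j XO_jY^{-1}\otimes YP_jZ^{-1}\otimes ZQ_jX^{-1}$ is in $\F$''.

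With that replacement, the rest of your argument is correct and is essentially the paper's. The paper also takes $\overline{X}^{-1}X$ from Lemma~\ref{lemma: necessary} and uses uniqueness to write the algorithm's $S$ as $\alpha\,\overline{X}^{-1}X$. It then replaces $X$ by $TX$ with $\overline{T}^{-1}T=\alpha I$, obtained from Theorem~\ref{thm: sufficiency}; your $\mu I$ from Hilbert 90 is a special case of such a $T$. Both then invoke Lemma~\ref{prop: Base Independence}. Your explicit checks fill in bookkeeping the paper leaves implicit: that $X'$ is invertible, hence an $\F$-basis of $\Fix$, and that the rescaling multiplies the three factors by scalars with product $1$.
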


\begin{proof}
    Suppose $X,Y,Z$ are such that $\sum_{j=1}^r XO_jY^{-1}\otimes YP_jZ^{-1}\otimes ZQ_jX^{-1}$ is in $\F$. Then $S = \overline{X^{-1}}X$ satisfies $SO_jP_jQ_j = \overline{O_jP_jQ_j}S$ and $S\overline{S} = I$. If we assume $S$ is unique in the above sense, then $\widetilde{S}$ and $\widetilde{X}$ computed by the algorithm satisfy $\widetilde{S}= \alpha S$ for some $\alpha \in\Fd$, and $\overline{\widetilde{X}}\widetilde{S} = \widetilde{X}$.
By Theorem \ref{thm: sufficiency}, there exists $T\in GL_n(\Fd)$, such that $\overline{T}^{-1}T = \alpha I$. It follows that
\begin{align}
    \widetilde{S} =\alpha S = (\overline{TX})^{-1}TX 
\end{align}
This means the rows of $X$ and $\widetilde{X}$ form bases for the fixed point space of $x\to \overline{x}S$.
Together with Proposition \ref{prop: Base Independence}, this gives the result.
    \end{proof}

\subsection{Integer algorithms}\label{Section: int}
Once a $\Q[\sqrt{d}]$ tensor is transformed to a $\Q$ tensor, one may ask further if there exist $X,Y,Z$ such $\sum_{j=1}^r XO_jY^{-1}\otimes YP_j Z^{-1}\otimes Z Q_jX^{-1}$ is an integer tensor for even faster computation. In this section, we show a way to check existence by examining the trace of $O_jP_jQ_j$.

If in a matrix multiplication tensor there exists $j$ such that trace of $O_jP_jQ_j$ is not an integer, then by Proposition \ref{Trace} below, there do not exist $X, Y, Z$ such that $\sum_{j=1}^r XO_jY^{-1}\otimes YP_j Z^{-1}\otimes Z Q_jX^{-1}$ is a tensor with only integer matrices (up to scalar). This actually shows that the $\langle 3,4,7,63\rangle$ complex algorithm found by the Google DeepMind team and the $\langle3,3,6,40\rangle$ $\Z[1/2]$ algorithm found by Smirnov cannot be transformed into integer algorithms, because in both cases $\operatorname{Trace}(O_jP_jQ_j) \not \in \Z$ for some $j$.

If all $O_jP_jQ_j$ have integer trace, for example, all $O_jP_jQ_j$'s in $\langle4,4,4,48\rangle$ tensor over $\Q[i]$ found by Google DeepMind team have trace 1 or 2, then we can next examine the $\operatorname{Trace}(O_{j_1}P_{j_1}Q_{j_1}\cdot O_{j_2}P_{j_2}Q_{j_2})$ for all $j_1,j_2 = 1,\dots,r$. As it turns out, there exist $j_1,j_2$ such that 
\begin{align*}
    &\operatorname{Trace}(O_{j_1}P_{j_1}Q_{j_1})=1\\
    &\operatorname{Trace}(O_{j_2}P_{j_2}Q_{j_2})=1\\
    &\operatorname{Trace}(O_{j_1}P_{j_1}Q_{j_1}\cdot O_{j_2}P_{j_2}Q_{j_2}) = 1/2
\end{align*}
This implies that the $\Z$ equivalent form of this algorithm does not exist. Similarly, the $\langle 2, 4, 5, 32\rangle$ algorithm over $\Z[1/2]$ by Hopcroft and Kerr does not have a $\Z$ equivalent form for the same reason.

Finally, the $\langle2,4,4,26\rangle$ algorithm over $\Z[1/2]$ also does not have a $\Z$ equivalent. This algorithm satisfies that for all $j=1,\dots,r$ and for all pairs $j_1,j_2 = 1,\dots,r$
\begin{align*}
    &\operatorname{Trace}(O_{j_1}P_{j_1}Q_{j_1})\in \Z\\
    &\operatorname{Trace}(O_{j_1}P_{j_1}Q_{j_1}\cdot O_{j_2}P_{j_2}Q_{j_2})\in \Z.
\end{align*}
However, there exist $j_1,j_2,j_3$ such that
\begin{align*}
   \operatorname{Trace}(O_{j_1}P_{j_1}Q_{j_1}\cdot O_{j_2}P_{j_2}Q_{j_2}\cdot O_{j_3}P_{j_3}Q_{j_3})\not\in \Z,
\end{align*}
which shows the non-existence of a $\Z$ equivalent form.

In general, we can use the following Proposition.
\begin{proposition}\label{Trace}
    Let $A_1,\dots,A_m\in M_n(\C)$. Suppose that there exist $j_1,\dots,j_k$ such that $\operatorname{Trace}(\prod_{\ell=1}^k A_{j_\ell})\not\in \mathbb{Z}$, then there does not exist $X$ invertible, such that $XA_jX^{-1} \in  M_n(\mathbb{Z})$ for every $j$.
\end{proposition}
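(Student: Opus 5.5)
We argue by contraposition. Suppose there is an invertible $X\in GL_n(\C)$ with $B_j := XA_jX^{-1}\in M_n(\Z)$ for every $j=1,\dots,m$. We will show that every product $\prod_{\ell=1}^k A_{j_\ell}$ then has integer trace, which contradicts the hypothesis.

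The key step is that conjugation commutes with products, because the inner factors $X^{-1}X$ telescope:
\begin{align*}
X\left(\prod_{\ell=1}^k A_{j_\ell}\right)X^{-1} = \prod_{\ell=1}^k \left(XA_{j_\ell}X^{-1}\right) = \prod_{\ell=1}^k B_{j_\ell}.
\end{align*}
Since $M_n(\Z)$ is closed under multiplication, the right-hand side has integer entries, and so its trace is an integer. The trace is invariant under similarity, since $\operatorname{Trace}(XMX^{-1})=\operatorname{Trace}(MX^{-1}X)=\operatorname{Trace}(M)$ by cyclicity. Hence
\begin{align*}
\operatorname{Trace}\left(\prod_{\ell=1}^k A_{j_\ell}\right)=\operatorname{Trace}\left(\prod_{\ell=1}^k B_{j_\ell}\right)\in\Z
\end{align*}
for every choice of indices $j_1,\dots,j_k$. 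This is the required contradiction.

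No step here is a real obstacle. The only point worth a remark is how the proposition connects to its use in Section \ref{Section: int}. There one has $XO_jY^{-1}$, $YP_jZ^{-1}$, $ZQ_jX^{-1}$ integral, possibly up to scalars $\alpha_j\beta_j\gamma_j$. In that setting one applies the proposition to $A_j=O_jP_jQ_j$, because $X(O_jP_jQ_j)X^{-1}$ is the product of the three integer matrices. If scalars are allowed, they should first be absorbed by the De Groote rescaling, which leaves the trilinear tensor unchanged.
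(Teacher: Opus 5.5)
Your proof is correct and follows essentially the same route as the paper's: contraposition, telescoping the conjugation across the product, closure of $M_n(\Z)$ under multiplication, and similarity-invariance of the trace. The only difference is that you state the trace-invariance step ($\operatorname{Trace}(XMX^{-1})=\operatorname{Trace}(M)$) explicitly, whereas the paper leaves it implicit.
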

\begin{proof}
    Suppose that there is an $X$ such that $XA_jX^{-1} \in M_n(\Z)$ for every $j$. Then for every indices $j_1,\dots,j_k$, $$\prod_{\ell=1}^k (XA_{j_\ell}X^{-1}) = X(\prod_{\ell=1}^k A_{j_\ell})X^{-1}\in M_n(\Z).$$
    Hence $\operatorname{Trace(\prod_{\ell=1}^k A_{j_\ell})} \in \Z$. This proves the contrapositive of the proposition.
\end{proof}

\section{Applications}
We applied our algorithm to the $\langle 4,4,4,48\rangle$ and $\langle 3,4,7,63\rangle$ algorithms over $\Q[i]$ in~\cite{NVEE25}, $\langle 4,9,4,104\rangle$ algorithm over $\Q[\sqrt{161}]$ in ~\cite{m494104}, and another $\langle 4,4,4,48\rangle$ algorithm over $\C$ in~\cite{m44448c}. 
We also checked if the $\langle 3, 3, 6, 40\rangle$ $\langle2,4,5,32\rangle$ and $\langle2,4,4,26\rangle$ algorithms over $\Z[1/2]$ in~\cite{m33640,Hopcroft1971,Ballard2015} can be transformed into $\Z$ tensors. Among these, two $\Q[i]$ tensors can be transformed to a tensor in $\Q$ as Dumas, Pernet, and Sedoglavic showed in \cite{m44448,m34763}, and we can show that the $\Q[\sqrt{161}]$ tensor cannot be transformed to a rational one using De-Groote action. The $\langle 4,4,4,48\rangle$ $\C$ tensor in~\cite{m44448c} cannot be transformed to an $\R$ tensor using De-Groote actions as well. 
The information above is summarized in the following table.

\begin{table}[h!]
    \centering
    \begin{tabular}{|c|c|c|c|}
        \hline
        Algorithm & Original Ring & New Ring & Non-existence\\
        \hline
        $\langle4,4,4,48\rangle$~\cite{NVEE25} & $\Q[i]$  & $\Z[1/8]$ &$\Z$\\
        \hline
        $\langle3,4,7,63\rangle$~\cite{NVEE25} & $\Q[i]$  & $\Q$ &$\Z$\\
        \hline
        $\langle4,4,4,48\rangle$~\cite{m44448c} & $\C$  & $\C$ &$\R$\\
        \hline
        $\langle4,9,4,104\rangle$~\cite{m494104} & $\Q[\sqrt{161}]$  & $\Q[\sqrt{161}]$ &$\Q$\\
        \hline
        $\langle3,3,6,40\rangle$~\cite{m33640} & $\Z[1/2]$  & $\Z[1/2]$ &$\Z$\\
        \hline
         $\langle2,4,5,32\rangle$~\cite{Hopcroft1971} & $\Z[1/2]$  & $\Z[1/2]$ &$\Z$\\
        \hline
         $\langle2,4,4,26\rangle$~\cite{BB15} & $\Z[1/2]$  & $\Z[1/2]$ &$\Z$\\
        \hline
    \end{tabular}
    \label{table:1}
\end{table}

The detailed steps of how our algorithm is applied to the $\langle 4,4,4,48\rangle$ tensor in \cite{NVEE25} are the following.

In step 1 and 2, after forming $O_jP_jQ_j$, the null space is one dimensional: 
\begin{align*}
\operatorname{Span}\left\{\begin{bmatrix}
0 & 0 & 0 & 1\\
0 & 0 & i & 0\\
0 & i & 0 & 0\\
1 & 0 & 0 & 0
\end{bmatrix}\right\}
\end{align*}
Thus, $S$ is in form
\begin{align*}
\begin{bmatrix}
0 & 0 & 0 & \alpha\\
0 & 0 & i\alpha & 0\\
0 & i\alpha & 0 & 0\\
\alpha & 0 & 0 & 0
\end{bmatrix}, \alpha \in \Q[i], |\alpha| = 1.
\end{align*}
The $X$~\cite{m44448} satisfies $\overline{X}S=X$ in STEP 4. Another feasible $X$ is
\begin{align*}
X = \begin{bmatrix}
1 & 0 & 0 & 1\\
0 & 1 & i & 0\\
0 & i & 1 & 0\\
i & 0 & 0 & -i
\end{bmatrix}
\end{align*}

For $P_jQ_jO_j$, the null space is also one-dimensional
\begin{align*}
\operatorname{Span}\left\{\begin{bmatrix}
-1 & 0 & 0 & 0\\
0 & -1 & 0 & 0\\
0 & 0 & -1 & 0\\
0 & 0 & 0 & 1
\end{bmatrix}\right\}
\end{align*}
Thus, $S$ is in form
\begin{align*}
\begin{bmatrix}
-\alpha & 0 & 0 & 0\\
0 & -\alpha & 0 & 0\\
0 & 0 & -\alpha & 0\\
0 & 0 & 0 & \alpha
\end{bmatrix} \alpha \in \Q[i], |\alpha| = 1.
\end{align*}
One feasible $Y$ is 
\begin{align*}
Y = \begin{bmatrix}
i & 0 & 0 & 0\\
0 & i & 0 & 0\\
0 & 0 & i & 0\\
0 & 0 & 0 & 1
\end{bmatrix}
\end{align*}
For  $Q_jO_jP_j$, $S =I$, so $Z$ can simply be $I$. The rational algorithm derived from transforming the $\langle4,4,4,48\rangle$ complex algorithm in~\cite{NVEE25} by taking 
\begin{align*}
     &X = \begin{bmatrix}
1 & 0 & 0 & 1\\
0 & 1 & i & 0\\
0 & i & 1 & 0\\
i & 0 & 0 & -i
\end{bmatrix} , \\
&Y = \begin{bmatrix}
-i & i & i & 0\\
0 & -i & 0 & 0\\
0 & 0 & -i & 0\\
0 & 0 & 0 & 1
\end{bmatrix}, \\
&Z = \begin{bmatrix}
1/2 & -1/2 & 1/2 & 0\\
1/2 & 1/2 & -1/2 & 0\\
-1/2 & 1/2 & 1/2 & 0\\
0 & 0 & 0 & 1
\end{bmatrix}
\end{align*}
is in the appendix.
Here, $X$, $Y$, $Z$ are arbitrarily chosen to illustrate that the choice of basis does not matter. Based on Lemma \ref{prop: Base Independence}, if this particular triple works, then any $X, Y, Z$ generated by the algorithm will work. 

\section{Open Problems}
\begin{enumerate}
  \item In this paper, we find a sufficient condition for a $\Z$ equivalent algorithm not to exist. It is useful to find a necessary and sufficient condition for an integer algorithm to be achieved using De-Groote actions.
  \item We argue that a problem like \ref{eq: problem} will not occur if $S$ is unique up to a scalar. Although $S$ is unique in all cases we see, it is interesting to argue whether the above problem can be avoided.
  \item Many fast matrix algorithms do not have an equivalent form over a reduced ring. 
  Another future direction is using the alternative basis method to bridge this gap.
  The alternative basis method was introduced by Karstadt and Schwartz to reduce the leading coefficient of complexity of fast matrix multiplication algorithms by sparsifying the encoding matrices \cite{AB1}. 
\end{enumerate}

\section{Acknowledgment}
Shuncheng Yuan acknowledges support from UC MRPI M25PL9060. 
We also thank Ioana Dumitriu for helpful comments for improving the clarity of the paper.
This project was supported by the European Research Council (ERC) under the European Union’s Horizon 2020 research and innovation programme (grant agreement No. 818252, 101113120, 101138056, 101142020), the Israel Science Foundation (grant No. 1354/23, 514/20), the Science Accelerator, the Frontiers in Science initiative of the Ministry of Innovation, Science and Technology, and from the Len Blavatnik and the Blavatnik Family Foundation.
This work was done in part while the authors were visiting the Simons Institute for the Theory of Computing.

\printbibliography
\appendix
\section{A Rational $\langle4,4,4,48\rangle$ Tensor}
\begin{longtable}{|c|c|c|c|}
\hline
 & $O_j$ & $P_j$ & $Q_j$ \\ \hline
\endfirsthead
\hline
 & $O_j$ & $P_j$ & $Q_j$ \\ \hline
\endhead

$j=1 $& $\begin{matrix}
1 & 2 & 1 & 0\\
1 & 2 & 1 & 0\\
1 & 2 & 1 & 0\\
-1 & -2 & -1 & 0
\end{matrix}$ & $\begin{matrix}
-1/2 & -1/2 & 0 & 0\\
-1/2 & -1/2 & 0 & 0\\
1/2 & 1/2 & 0 & 0\\
1/2 & 1/2 & 0 & 0
\end{matrix}$ &$\begin{matrix}
0 & -1/2 & 0 & 1/2\\
0 & 0 & 0 & 0\\
0 & 1/2 & 0 & -1/2\\
0 & -1/2 & 0 & 1/2
\end{matrix}$ \\ \hline

 $j=2 $& $\begin{matrix}
-1 & -1 & -1 & -1\\
-1 & -1 & -1 & -1\\
1 & 1 & 1 & 1\\
1 & 1 & 1 & 1
\end{matrix}$ & $\begin{matrix}
0 & -1/2 & -1/2 & -1/2\\
0 & 1/2 & 1/2 & 1/2\\
0 & 1/2 & 1/2 & 1/2\\
0 & 1/2 & 1/2 & 1/2
\end{matrix}$ &$\begin{matrix}
-1/4 & 0 & 1/4 & 0\\
-1/4 & 0 & 1/4 & 0\\
1/4 & 0 & -1/4 & 0\\
-1/2 & 0 & 1/2 & 0
\end{matrix}$ \\ \hline

$j=3 $& $\begin{matrix}
 0&  1&  -1& 0\\
 0&  1&  -1& 0\\
 0&  -1&  1& 0\\
 0&  1&  -1&0 
\end{matrix}$ & $\begin{matrix}
 0& 1/2 & 1/2 & 0\\
 0&  -1/2&  -1/2& 0\\
 0&  1/2&  1/2& 0\\
 0&  1/2&  1/2& 0
\end{matrix}$ &$\begin{matrix}
 0&  0& -1/2 & 1/2\\
 0&  0&  1/2& -1/2\\
 0&  0&  0& 0\\
 0& 0 &  0& 0
\end{matrix}$ \\ \hline

$j=4 $& $\begin{matrix}
 1&  2&  0& -1\\
 -1&  0&  -2& 1\\
 1&  2&  0& -1\\
 -1&  0&  -2& 1
\end{matrix}$ & $\begin{matrix}
 0&  -1/2&  1/2& 0\\
 -1/2&  -1/2&  -1&0 \\
 1/2&  1/2&  1&0 \\
 0&  1/2& -1/2 & 0
\end{matrix}$ &$\begin{matrix}
3/8 & -1/8 & 3/8 & - 1/8 \\
 -3/8&  1/8&  -3/8& 1/8\\
 -1/8&  -1/8& -1/8 &-1/8 \\
 1/4&  1/4& 1/4 & 1/4
\end{matrix}$ \\ \hline

$j=5$ &
$\begin{matrix}
-1 & 0 & -1 & 0\\
1 & 0 & 1 & 0\\
1 & 0 & 1 & 0\\
1 & 0 & 1 & 0
\end{matrix}$ &
$\begin{matrix}
0 & 3/2 & -3/2&-3/2\\
0 & -1/2 & 1/2&1/2\\
0 & -1/2 & 1/2&1/2\\
0 & 1/2 & -1/2&-1/2
\end{matrix}$ &
$\begin{matrix}
 0 & 0 & 0 & 0\\
 0 & 0 & 0 & 0\\
 0 & 0 & 0 & 0\\
 0 & -1/2 & 0 & -1/2
\end{matrix}$ \\ \hline

$j=6$ &
$\begin{matrix}
0 & 0 & 1 & -1\\
0 & 0 & 1 & -1\\
0 & 0 & -1 & 1\\
0 & 0 & -1 & 1
\end{matrix}$ &
$\begin{matrix}
0 & -1/2 & -1/2&-1/2\\
0 &  1/2 &  1/2&1/2\\
0 &  1/2 &  1/2&1/2\\
0 & -1/2 & -1/2&-1/2
\end{matrix}$ &
$\begin{matrix}
 0 & 0 & 3/4 &3/4\\
0 & 0 & -1/4 & -1/4\\
 0 & 0 & -1/4 & -1/4\\
 0 & 0 & 0 & 0
\end{matrix}$ \\ \hline

$j=7$ &
$\begin{matrix}
-1/2 & -1/2 & -1/2 &  1/2\\
-1/2 & -1/2 & -1/2 &  1/2\\
 1/2 &  1/2 &  1/2 & -1/2\\
-1/2 & -1/2 & -1/2 &  1/2
\end{matrix}$ &
$\begin{matrix}
0 & -3 & -3 & 0\\
0 &  1 &  1 &0\\
0 &  1 &  1&0\\
0 &  1 &  1&0
\end{matrix}$ &
$\begin{matrix}
 -1/4 & -1/4 & 0 & 0\\
 1/4 &  1/4 & 0 & 0\\
 1/4 &  1/4 & 0 & 0\\
-1/2 & -1/2 & 0 & 0
\end{matrix}$ \\ \hline

$j=8$ &
$\begin{matrix}
-1 & 0 & -1 & 0\\
1 & 0 & 1 & 0\\
1 & 0 & 1 & 0\\
-1 & 0 & -1 & 0
\end{matrix}$ &
$\begin{matrix}
-1/2 & -1/2 & 0 & 1/2\\
 1/2 &  1/2 & 0& -1/2\\
-1/2 & -1/2 & 0& 1/2\\
-1/2 & -1/2 & 0&1/2
\end{matrix}$ &
$\begin{matrix}
 0 & 0 & 0 & 0\\
 0 & 0 & 0 & 0\\
0 & 0 & 0 & 0\\
 0 & 0 &  1/2 & -1/2
\end{matrix}$ \\ \hline

$j=9$ &
$\begin{matrix}
1/2 & 1 & 0 & -1/2\\
1/2 & 1 & 1 & 1/2\\
-1/2 & -1 & 0 & 1/2\\
1/2 & 1 & 1 & 1/2
\end{matrix}$ &
$\begin{matrix}
0 & -1 & -1&0\\
0 & 1 & -1&-1\\
0 & 1 & 1&0\\
0 & 1 & 1&0
\end{matrix}$ &
$\begin{matrix}
 1/8 & -3/8 & -1/8 & -3/8\\
 1/8 & 1/8 & -1/8 & 1/8\\
 -3/8 & 1/8 & 3/8 & 1/8\\
 1/4 & -1/4 & -1/4 & -1/4
\end{matrix}$ \\ \hline

$j=10$ &
$\begin{matrix}
1 & 1 & 1 & 1\\
-1 & -1 & -1 & -1\\
1 & 1 & 1 & 1\\
1 & 1 & 1 & 1
\end{matrix}$ &
$\begin{matrix}
1/2 & 1/2 & 1&1/2\\
1/2 & 1/2 & 1&1/2\\
-1/2 & -1/2 & -1&-1/2\\
1/2 & 1/2 & 1&1/2
\end{matrix}$ &
$\begin{matrix}
 1/4 & -1/4 & 0 & 0\\
 1/4 & -1/4 & 0 & 0\\
 -1/4 & 1/4 & 0 & 0\\
 1/2 & -1/2 & 0 & 0
\end{matrix}$ \\ \hline

$j=11$ &
$\begin{matrix}
-1 & 0 & -1 & 0\\
-1 & 0 & -1 & 0\\
1 & 0 & 1 & 0\\
1 & 0 & 1 & 0
\end{matrix}$ &
$\begin{matrix}
0 & 3/2 & 3/2&3/2\\
0 & -1/2 & -1/2&-1/2\\
0 & -1/2 & -1/2&-1/2\\
0 & -1/2 & -1/2& -1/2
\end{matrix}$ &
$\begin{matrix}
 0 & 0 & 0 & 0\\
 0 & 0 & 0 & 0\\
0 & 0 & 0 & 0\\
 -1/2 & -1/2 & 0 & 0
\end{matrix}$ \\ \hline

$j=12$ &
$\begin{matrix}
1/2 & 1 & 1 & -1/2\\
-1/2 & -1 & -1 & 1/2\\
1/2 & 1 & 0 & 1/2\\
-1/2 & -1 & 0 & -1/2
\end{matrix}$ &
$\begin{matrix}
-1/2 & -1/2 & -1&0\\
-1/2 & -1/2 & 0&1/2\\
1/2 & 1/2 & 1&0\\
-1/2 & -1/2 & -1&0
\end{matrix}$ &
$\begin{matrix}
 -5/4 & 5/4 & -1/4 & 1/4\\
  1/4 & -1/4 & 1/4 & -1/4\\
 3/4 & -3/4 & -1/4 & 1/4\\
 -1/2 & 1/2 & 1/2 & -1/2
\end{matrix}$ \\ \hline

$j=13$ &
$\begin{matrix}
0 & -1 & 1 & 0\\
0 & 1 & -1 & 0\\
0 & 1 & -1 & 0\\
0 & 1 & -1 & 0
\end{matrix}$ &
$\begin{matrix}
0 & 1/2 & -1/2&-1/2\\
0 & 1/2 & -1/2&-1/2\\
0 & -1/2 & 1/2&1/2\\
0 & 1/2 & -1/2&-1/2
\end{matrix}$ &
$\begin{matrix}
 1/2 & -1/2 & 0 & 0\\
 -1/2 & 1/2 & 0 & 0\\
 0 & 0 & 0 & 0\\
 0 & 0 & 0 & 0
\end{matrix}$ \\ \hline

$j=14$ &
$\begin{matrix}
0 & -1 & 1 & 0\\
0 & -1 & 1 & 0\\
0 & -1 & 1 & 0\\
0 & -1 & 1 & 0
\end{matrix}$ &
$\begin{matrix}
0 & 1/2 & -1/2&0\\
0 & -1/2 & 1/2&0\\
0 & 1/2 & -1/2&0\\
0 & -1/2 & 1/2&0
\end{matrix}$ &
$\begin{matrix}
 -1/2 & 0 & -1/2 & 0\\
 1/2 & 0 & 1/2 & 0\\
 0 & 0 & 0 & 0\\
 0 & 0 & 0 & 0
\end{matrix}$ \\ \hline

$j=15$ &
$\begin{matrix}
-1/2 & 0 & -1 & -1/2\\
-1/2 & 0 & -1 & -1/2\\
-1/2 & -1 & 0 & -1/2\\
1/2 & 1 & 0 & 1/2
\end{matrix}$ &
$\begin{matrix}
0 & 1 & 1 &0\\
1 & 1 & 0&0\\
-1 & -1 & 0&0\\
0 & 1 & 1&0
\end{matrix}$ &
$\begin{matrix}
 3/8 & 3/8 & 1/8 & -1/8\\
 -1/8 & -1/8 & -3/8 & 3/8\\
 -1/8 & -1/8 & 1/8 & -1/8\\
 1/4 & 1/4 & -1/4 & 1/4
\end{matrix}$ \\ \hline

$j=16$ &
$\begin{matrix}
0 & 0 & 1 & -1\\
0 & 0 & -1 & 1\\
0 & 0 & -1 & 1\\
0 & 0 & 1 & -1
\end{matrix}$ &
$\begin{matrix}
1/2 & 1/2 & 0& -1/2\\
1/2 & 1/2 & 0&-1/2\\
-1/2 & -1/2 & 0&1/2\\
1/2 & 1/2 & 0&-1/2
\end{matrix}$ &
$\begin{matrix}
 -3/4 & 3/4 & 0 & 0\\
 1/4 & -1/4 & 0 & 0\\
 1/4 & -1/4 & 0 & 0\\
 0 & 0 & 0 & 0
\end{matrix}$ \\ \hline

$j=17$ &
$\begin{matrix}
1/2 & 0 & 0 & 1/2\\
1/2 & 1 & 1 & 1/2\\
1/2 & 0 & 0 & 1/2\\
1/2 & 1 & 1 & 1/2
\end{matrix}$ &
$\begin{matrix}
-1 & 1 & -4&0\\
0 & -1 & 1&0\\
0 & -1 & 1&0\\
-1 & -1 & -2&0
\end{matrix}$ &
$\begin{matrix}
 -1/8 & -3/8 & -1/8 & -3/8\\
 1/8 & -1/8 & 1/8 & -1/8\\
 -1/8 & 1/8 & -1/8 & 1/8\\
 1/4 & -1/4 & 1/4 & -1/4
\end{matrix}$ \\ \hline

$j=18$ &
$\begin{matrix}
-1 & -2 & -1 & 0\\
-1 & -2 & -1 & 0\\
-1 & -2 & -1 & 0\\
-1 & -2 & -1 & 0
\end{matrix}$ &
$\begin{matrix}
0 & 1/2 & -1/2&0\\
0 & -1/2 & 1/2&0\\
0 & -1/2 & 1/2&0\\
0 & 1/2 & -1/2&0
\end{matrix}$ &
$\begin{matrix}
 0 & 0 & 1/2 & 1/2\\
 0 & 0 & 0 & 0\\
 0 & 0 & -1/2 & -1/2\\
 0 & 0 & 1/2 & 1/2
\end{matrix}$ \\ \hline

$j=19$ &
$\begin{matrix}
-1/2 & -1 & -1 & 1/2\\
-1/2 & -1 & -1 & 1/2\\
-1/2 & -1 & 0 & -1/2\\
-1/2 & -1 & 0 & -1/2
\end{matrix}$ &
$\begin{matrix}
0 & 1/2 & -1/2& 0\\
0 & -1/2 & -1/2&-1/2\\
0 & -1/2 & 1/2&0\\
0 & 1/2 & -1/2&0
\end{matrix}$ &
$\begin{matrix}
 -1/4 & -1/4 & -5/4 & -5/4\\
 1/4 & 1/4 & 1/4 & 1/4\\
 -1/4 & -1/4 & 3/4 & 3/4\\
 1/2 & 1/2 & -1/2 & -1/2
\end{matrix}$ \\ \hline

$j=20$ &
$\begin{matrix}
0 & 0 & -1 & -1\\
0 & 0 & -1 & -1\\
0 & 0 & -1 & -1\\
0 & 0 & -1 & -1
\end{matrix}$ &
$\begin{matrix}
0 & -3/2 & 3/2&0\\
0 & 1/2 & -1/2&0\\
0 & 1/2 & -1/2&0\\
0 & 1/2 & -1/2&0
\end{matrix}$ &
$\begin{matrix}
 1/4 & 1/4 & 0 & 0\\
 -1/4 & -1/4 & 0 & 0\\
1/4 & 1/4 & 0 & 0\\
 0 & 0 & 0 & 0
\end{matrix}$ \\ \hline

$j=21$ &
$\begin{matrix}
0 & -1 & 1 & 0\\
0 & -1 & 1 & 0\\
0 & 1 & -1 & 0\\
0 & 1 & -1 & 0
\end{matrix}$ &
$\begin{matrix}
0 & -1/2 & -1/2&-1/2\\
0 & -1/2 & -1/2&-1/2\\
0 & 1/2 & 1/2&1/2\\
0 & 1/2 & 1/2&1/2
\end{matrix}$ &
$\begin{matrix}
 0 & -1/2 & 0 & 1/2\\
 0 & 1/2 & 0 & -1/2\\
 0 & 0 & 0 & 0\\
 0 & 0 & 0 & 0
\end{matrix}$ \\ \hline

$j=22$ &
$\begin{matrix}
-1/2 & 0 & -1 & -1/2\\
1/2 & 0 & 1 & 1/2\\
1/2 & 0 & 0 & -1/2\\
-1/2 & 0 & 0 & 1/2
\end{matrix}$ &
$\begin{matrix}
-1 & -1 & -4&-1\\
1 & 1 & 2&0\\
-1 & -1 & 0&1\\
-1 & -1 & 0&1
\end{matrix}$ &
$\begin{matrix}
 1/8 & -1/8 & -1/8 & 1/8\\
 -1/8 & 1/8 & 1/8 & -1/8\\
 1/8 & -1/8 & -1/8 & 1/8\\
 -1/4 & 1/4 & -1/4 & 1/4
\end{matrix}$ \\ \hline

$j=23$ &
$\begin{matrix}
-1 & -1 & -1 & 1\\
1 & 1 & 1 & -1\\
1 & 1 & 1 & -1\\
1 & 1 & 1 & -1
\end{matrix}$ &
$\begin{matrix}
0 & 1/2 & -1/2&-1/2\\
0 & -1/2 & 1/2&1/2\\
0 & -1/2 & 1/2&1/2\\
0 & 1/2 & -1/2&-1/2
\end{matrix}$ &
$\begin{matrix}
 0 & 0 & 1/4 & 1/4\\
 0 & 0 & -1/4 & -1/4\\
 0 & 0 & -1/4 & -1/4\\
 0 & 0 & 1/2 & 1/2
\end{matrix}$ \\ \hline

$j=24$ &
$\begin{matrix}
0 & 0 & 1 & 1\\
0 & 0 & -1 & -1\\
0 & 0 & -1 & -1\\
0 & 0 & -1 & -1
\end{matrix}$ &
$\begin{matrix}
0 & -1/2 & 1/2&1/2\\
0 & 1/2 & -1/2&-1/2\\
0 & 1/2 & -1/2&-1/2\\
0 & 1/2 & -1/2&-1/2
\end{matrix}$ &
$\begin{matrix}
 -1/4 & 0 & 1/4 & 0\\
 1/4 & 0 & -1/4 & 0\\
 -1/4 & 0 & 1/4 & 0\\
0 & 0 & 0 & 0
\end{matrix}$ \\ \hline

$j=25$ &
$\begin{matrix}
-1 & 0 & 0 & -1\\
1 & 0 & 2 & -1\\
1 & 0 & 0 & 1\\
1 & 0 & 2 & -1
\end{matrix}$ &
$\begin{matrix}
0 & 3/2 & 1/2&-1/2\\
0 & -1/2 & -1/2&0\\
0 & -1/2 & 1/2&1/2\\
0 & 1/2 & -1/2&-1/2
\end{matrix}$ &
$\begin{matrix}
 3/8 & -3/8 & -3/8 & -3/8\\
 -1/8 & 1/8 & 1/8 & 1/8\\
 -1/8 & 1/8 & 1/8 & 1/8\\
 1/4 & 1/4 & -1/4 & 1/4
\end{matrix}$ \\ \hline

$j=26$ &
$\begin{matrix}
0 & 0 & -1 & -1\\
0 & 0 & 1 & 1\\
0 & 0 & -1 & -1\\
0 & 0 & -1 & -1
\end{matrix}$ &
$\begin{matrix}
1/2 & 1/2 & 1&1/2\\
1/2 & 1/2 & 1&1/2\\
-1/2 & -1/2 & -1&-1/2\\
-1/2 & -1/2 & -1&-1/2
\end{matrix}$ &
$\begin{matrix}
 0 & -1/4 & 0 & 1/4\\
 0 & 1/4 & 0 & -1/4\\
 0 & -1/4 & 0 & 1/4\\
 0 & 0 & 0 & 0
\end{matrix}$ \\ \hline

$j=27$ &
$\begin{matrix}
0 & -1 & -1 & 0\\
0 & 1 & 1 & 0\\
0 & -1 & -1 & 0\\
0 & 1 & 1 & 0
\end{matrix}$ &
$\begin{matrix}
-3/2 & -3/2 & -3&0\\
1/2 & 1/2 & 1&0\\
1/2 & 1/2 & 1&0\\
-1/2 & -1/2 & -1&0
\end{matrix}$ &
$\begin{matrix}
 0 & 1/2 & 0 & 1/2\\
0 & 0 & 0 & 0\\
 0 & 0 & 0 & 0\\
 0 & 0 & 0 & 0
\end{matrix}$ \\ \hline

$j=28$ &
$\begin{matrix}
1/2 & 1 & 0 & -1/2\\
1/2 & 1 & 1 & 1/2\\
1/2 & 1 & 0 & -1/2\\
-1/2 & -1 & -1 & -1/2
\end{matrix}$ &
$\begin{matrix}
1/2 & 1/2 & 0&0\\
1/2 & 1/2 & 1&1/2\\
-1/2 & -1/2 & 0&0\\
-1/2 & -1/2 & 0&0
\end{matrix}$ &
$\begin{matrix}
 3/4 & -1/4 & 3/4 & 1/4\\
 -1/4 & -1/4 & -1/4 & 1/4\\
-1/4 & 3/4 & -1/4 & -3/4\\
 1/2 & -1/2 & 1/2 & 1/2
\end{matrix}$ \\ \hline

$j=29$ &
$\begin{matrix}
1 & 2 & 1 & 0\\
1 & 2 & 1 & 0\\
-1 & -2 & -1 & 0\\
1 & 2 & 1 & 0
\end{matrix}$ &
$\begin{matrix}
0 & 1/2 & 1/2&0\\
0 & -1/2 & -1/2&0\\
0 & -1/2 & -1/2&0\\
0 & -1/2 & -1/2&0
\end{matrix}$ &
$\begin{matrix}
 1/2 & 0 & -1/2 & 0\\
 0 & 0 & 0 & 0\\
 -1/2 & 0 & 1/2 & 0\\
 1/2 & 0 & -1/2 & 0
\end{matrix}$ \\ \hline

$j=30$ &
$\begin{matrix}
-1 & -1 & -1 & 1\\
1 & 1 & 1 & -1\\
-1 & -1 & -1 & 1\\
1 & 1 & 1 & -1
\end{matrix}$ &
$\begin{matrix}
1/2 & 1/2 & 1&0\\
-1/2 & -1/2 & -1&0\\
1/2 & 1/2 & 1&0\\
-1/2 & -1/2 & -1&0
\end{matrix}$ &
$\begin{matrix}
1/4 & 0 & 1/4 & 0\\
-1/4 & 0 & -1/4 & 0\\
 -1/4 & 0 & -1/4 & 0\\
1/2 & 0 & 1/2 & 0
\end{matrix}$ \\ \hline

$j=31$ &
$\begin{matrix}
0 & -1 & -1 & 0\\
0 & 1 & 1 & 0\\
0 & 1 & 1 & 0\\
0 & -1 & -1 & 0
\end{matrix}$ &
$\begin{matrix}
1/2 & 1/2 & 0&-1/2\\
-1/2 & -1/2 & 0&1/2\\
-1/2 & -1/2 & 0&1/2\\
-1/2 & -1/2 & 0&1/2
\end{matrix}$ &
$\begin{matrix}
 1/2 & 0 & -1/2 & 0\\
 0 & 0 & 0 & 0\\
 0 & 0 & 0 & 0\\
 0 & 0 & 0 & 0
\end{matrix}$ \\ \hline

$j=32$ &
$\begin{matrix}
1/2 & 0 & 1 & 1/2\\
1/2 & 0 & 1 & 1/2\\
-1/2 & 0 & 0 & 1/2\\
-1/2 & 0 & 0 & 1/2
\end{matrix}$ &
$\begin{matrix}
0 & 3 & -1&1\\
0 & -1 & 1&0\\
0 & -1 & -1&-1\\
0 & -1 & -1&-1
\end{matrix}$ &
$\begin{matrix}
 -1/8 & -1/8 & 1/8 & 1/8\\
 1/8 & 1/8 & -1/8 & -1/8\\
 -1/8 & -1/8 & 1/8 & 1/8\\
 -1/4 & -1/4 & -1/4 & -1/4
\end{matrix}$ \\ \hline

$j=33$ &
$\begin{matrix}
0 & 0 & -1 & 1\\
0 & 0 & -1 & 1\\
0 & 0 & 1 & -1\\
0 & 0 & -1 & 1
\end{matrix}$ &
$\begin{matrix}
0 & 3/2 & 3/2&0\\
0 & -1/2 & -1/2&0\\
0 & -1/2 & -1/2&0\\
0 & 1/2 & 1/2&0
\end{matrix}$ &
$\begin{matrix}
 0 & -3/4 & 0 & -3/4\\
 0 & 1/4 & 0 & 1/4\\
 0 & 1/4 & 0 & 1/4\\
 0 & 0 & 0 & 0
\end{matrix}$ \\ \hline

$j=34$ &
$\begin{matrix}
-1/2 & -1 & 0 & 1/2\\
1/2 & 0 & 1 & -1/2\\
1/2 & 1 & 0 & -1/2\\
-1/2 & 0 & -1 & 1/2
\end{matrix}$ &
$\begin{matrix}
0 & 1 & 1&1\\
-1 & -1 & 0&1\\
1 & 1 & 0&-1\\
0 & -1 & -1&-1
\end{matrix}$ &
$\begin{matrix}
 1/8 & -3/8 & -1/8 & 3/8\\
 -1/8 & 3/8 & 1/8 & -3/8\\
 1/8 & 1/8 & -1/8 & -1/8\\
 -1/4 & -1/4 & 1/4 & 1/4
\end{matrix}$ \\ \hline

$j=35$ &
$\begin{matrix}
1 & 0 & 0 & 1\\
-1 & 0 & -2 & 1\\
1 & 0 & 0 & 1\\
1 & 0 & 2 & -1
\end{matrix}$ &
$\begin{matrix}
1/2 & 1/2 & -1&-1/2\\
-1/2 & -1/2 & 0&0\\
1/2 & 1/2 & 1&1/2\\
-1/2 & -1/2 & -1&-1/2
\end{matrix}$ &
$\begin{matrix}
 3/8 & -3/8 & 3/8 & 3/8\\
 -1/8 & 1/8 & -1/8 & -1/8\\
 -1/8 & 1/8 & -1/8 & -1/8\\
-1/4 & -1/4 & -1/4 & 1/4
\end{matrix}$ \\ \hline

$j=36$ &
$\begin{matrix}
0 & 0 & 1 & 1\\
0 & 0 & -1 & -1\\
0 & 0 & 1 & 1\\
0 & 0 & -1 & -1
\end{matrix}$ &
$\begin{matrix}
-1/2 & -1/2 & -1&0\\
1/2 & 1/2 & 1&0\\
-1/2 & -1/2 & -1&0\\
-1/2 & -1/2 & -1&0
\end{matrix}$ &
$\begin{matrix}
 0 & 0 & -1/4 & 1/4\\
 0 & 0 & 1/4 & -1/4\\
 0 & 0 & -1/4 & 1/4\\
 0 & 0 & 0 & 0
\end{matrix}$ \\ \hline

$j=37$ &
$\begin{matrix}
0 & 1 & 1 & 0\\
0 & -1 & -1 & 0\\
0 & 1 & 1 & 0\\
0 & 1 & 1 & 0
\end{matrix}$ &
$\begin{matrix}
1/2 & 1/2 & 1&1/2\\
-1/2 & -1/2 & -1&-1/2\\
-1/2 & -1/2 & -1&-1/2\\
1/2 & 1/2 & 1&1/2
\end{matrix}$ &
$\begin{matrix}
 0 & 0 & -1/2 & -1/2\\
 0 & 0 & 0 & 0\\
 0 & 0 & 0 & 0\\
 0 & 0 & 0 & 0
\end{matrix}$ \\ \hline

$j=38$ &
$\begin{matrix}
1/2 & 1 & 1 & -1/2\\
-1/2 & -1 & -1 & 1/2\\
-1/2 & 0 & 0 & 1/2\\
-1/2 & 0 & 0 & 1/2
\end{matrix}$ &
$\begin{matrix}
1/2 & 3/2 & 0&-1/2\\
0 & -1/2 & 1/2&1/2\\
0 & -1/2 & 1/2&1/2\\
-1/2 & -1/2 & -1&-1/2
\end{matrix}$ &
$\begin{matrix}
 3/4 & -3/4 & -1/4 & -1/4\\
 -1/4 & 1/4 & -1/4 & -1/4\\
 -1/4 & 1/4 & -1/4 & -1/4\\
 1/2 & -1/2 & 1/2 & 1/2
\end{matrix}$ \\ \hline

$j=39$ &
$\begin{matrix}
0 & 1 & 1 & 0\\
0 & 1 & 1 & 0\\
0 & 1 & 1 & 0\\
0 & -1 & -1 & 0
\end{matrix}$ &
$\begin{matrix}
3/2 & 3/2 & 0&0\\
-1/2 & -1/2 & 0&0\\
-1/2 & -1/2 & 0&0\\
-1/2 & -1/2 & 0&0
\end{matrix}$ &
$\begin{matrix}
 -1/2 & -1/2 & 0 & 0\\
 0 & 0 & 0 & 0\\
 0 & 0 & 0 & 0\\
 0 & 0 & 0 & 0
\end{matrix}$ \\ \hline

$j=40$ &
$\begin{matrix}
-1/2 & 0 & 0 & -1/2\\
-1/2 & -1 & -1 & -1/2\\
1/2 & 0 & 0 & 1/2\\
1/2 & 1 & 1 & 1/2
\end{matrix}$ &
$\begin{matrix}
-1/2 & -3/2 & -1&-1/2\\
0 & 1/2 & 1/2&1/2\\
0 & 1/2 & 1/2&1/2\\
-1/2 & -1/2 & 0&1/2
\end{matrix}$ &
$\begin{matrix}
 3/4 & 1/4 & -3/4 & -1/4\\
 1/4 & -1/4 & -1/4 & 1/4\\
 -1/4 & 1/4 & 1/4 & -1/4\\
 1/2 & -1/2 & -1/2 & 1/2
\end{matrix}$ \\ \hline

$j=41$ &
$\begin{matrix}
1 & 2 & 1 & 0\\
-1 & -2 & -1 & 0\\
1 & 2 & 1 & 0\\
-1 & -2 & -1 & 0
\end{matrix}$ &
$\begin{matrix}
1/2 & 1/2 & 1&0\\
1/2 & 1/2 & 1&0\\
-1/2 & -1/2 & -1&0\\
1/2 & 1/2 & 1&0
\end{matrix}$ &
$\begin{matrix}
 -1/2 & 1/2 & 0 & 0\\
0 & 0 & 0 & 0\\
 1/2 & -1/2 & 0 & 0\\
-1/2 & 1/2 & 0 & 0
\end{matrix}$ \\ \hline

$j=42$ &
$\begin{matrix}
-1 & -1 & -1 & 1\\
1 & 1 & 1 & -1\\
1 & 1 & 1 & -1\\
-1 & -1 & -1 & 1
\end{matrix}$ &
$\begin{matrix}
-1/2 & -1/2 & 0&1/2\\
-1/2 & -1/2 & 0&1/2\\
1/2 & 1/2 & 0&-1/2\\
1/2 & 1/2 & 0&-1/2
\end{matrix}$ &
$\begin{matrix}
 0 & 1/4 & 0 & -1/4\\
 0 & -1/4 & 0 & 1/4\\
 0 & -1/4 & 0 & 1/4\\
 0 & 1/2 & 0 & -1/2
\end{matrix}$ \\ \hline

$j=43$ &
$\begin{matrix}
-1 & -1 & -1 & -1\\
-1 & -1 & -1 & -1\\
-1 & -1 & -1 & -1\\
1 & 1 & 1 & 1
\end{matrix}$ &
$\begin{matrix}
1/2 & 1/2 & 0 & 0\\
-1/2 & -1/2 & 0 & 0\\
1/2 & 1/2 & 0 & 0\\
1/2 & 1/2 & 0 & 0
\end{matrix}$ &
$\begin{matrix}
0 & 0 & -1/4& 1/4\\
0 & 0 & -1/4 & 1/4\\
0 & 0 & 1/4 & -1/4\\
0 & 0 & -1/2 & 1/2
\end{matrix}$ \\ \hline

$j=44$ &
$\begin{matrix}
-1/2 & 0 & -1 & -1/2\\
1/2 & 0 & 1 & 1/2\\
-1/2 & -1 & 0 & -1/2\\
-1/2 & -1 & 0 & -1/2
\end{matrix}$ &
$\begin{matrix}
0 & 1 & -1 & -1\\
-1 & -1 & -2 & -1\\
1 & 1 & 2 & 1\\
0 & 1 & -1 & -1
\end{matrix}$ &
$\begin{matrix}
 1/8 & -1/8 & 3/8 & 3/8\\
 -3/8 & 3/8 & -1/8 & -1/8\\
 1/8 & -1/8 & -1/8 & -1/8\\
 -1/4 & 1/4 & 1/4 & 1/4
\end{matrix}$ \\ \hline

$j=45$ &
$\begin{matrix}
0 & 0 & 1 & -1\\
0 & 0 & 1 & -1\\
0 & 0 & 1 & -1\\
0 & 0 & -1 & 1
\end{matrix}$ &
$\begin{matrix}
-1/2 & -1/2 & 0 & 0\\
1/2 & 1/2 & 0 & 0\\
-1/2 & -1/2 & 0 & 0\\
1/2 & 1/2 & 0 & 0
\end{matrix}$ &
$\begin{matrix}
 -3/4 & 0 & -3/4 & 0\\
 1/4 & 0 & 1/4 & 0\\
 1/4 & 0 & 1/4 & 0\\
 0 & 0 & 0 & 0
\end{matrix}$ \\ \hline

$j=46$ &
$\begin{matrix}
1 & 0 & 1 & 0\\
-1 & 0 & -1 & 0\\
1 & 0 & 1 & 0\\
1 & 0 & 1 & 0
\end{matrix}$ &
$\begin{matrix}
-1/2 & -1/2 & -1 & -1/2\\
1/2 & 1/2 & 1 & 1/2\\
-1/2 & -1/2 & -1 & -1/2\\
1/2 & 1/2 & 1 & 1/2
\end{matrix}$ &
$\begin{matrix}
 0 & 0 & 0 & 0\\
 0 & 0 & 0 & 0\\
 0 & 0 & 0 & 0\\
 -1/2 & 0 & -1/2 & 0
\end{matrix}$ \\ \hline

$j=47$ &
$\begin{matrix}
-1 & -1 & -1 & -1\\
-1 & -1 & -1 & -1\\
-1 & -1 & -1 & -1\\
-1 & -1 & -1 & -1
\end{matrix}$ &
$\begin{matrix}
0 & 3/2 & -3/2 & 0\\
0 & -1/2 & 1/2 & 0\\
0 & -1/2 & 1/2 & 0\\
0 & 1/2 & -1/2 & 0
\end{matrix}$ &
$\begin{matrix}
 0 & -1/4 & 0 & -1/4\\
 0 & -1/4 & 0 & -1/4\\
 0 & 1/4 & 0 & 1/4\\
 0 & -1/2 & 0 & -1/2
\end{matrix}$ \\ \hline

$j=48$ &
$\begin{matrix}
-1/2 & -1 & -1 & 1/2\\
-1/2 & -1 & -1 & 1/2\\
1/2 & 0 & 0 & -1/2\\
-1/2 & 0 & 0 & 1/2
\end{matrix}$ &
$\begin{matrix}
-1 & 1 & 2 & 0\\
0 & -1 & -1 & 0\\
0 & -1 & -1 & 0\\
1 & 1 & 0 & 0
\end{matrix}$ &
$\begin{matrix}
 1/8 & 1/8 & -3/8 & 3/8\\
 1/8 & 1/8 & 1/8 & -1/8\\
 1/8 & 1/8 & 1/8 & -1/8\\
 -1/4 & -1/4 & -1/4 & 1/4
\end{matrix}$ \\ \hline

\end{longtable}

\end{document}